\documentclass{article}
\usepackage{graphicx} % Required for inserting images
\usepackage{enumerate}
\usepackage{cite}
\usepackage{pgfplots}
\pgfplotsset{compat=1.18}
\usepackage{amssymb,amsthm,amsmath}
\title{KKT conditions for optimization with generalized invex fuzzy functions}
\author{Ville Rinne, Yury Nikulin\footnote{Corresponding author}, Marko M. Mäkelä}
\date{11th, February 2026}

\theoremstyle{definition}
\newtheorem{maar}{Definition}[section]

\newtheorem{theorem}[maar]{Theorem}
\newtheorem{example}[maar]{Example}
\newtheorem{prop}[maar]{Proposition}
\newtheorem{rem}[maar]{Remark}

\newtheorem{coro}[maar]{Corollary}
\begin{document}
\maketitle
\begin{abstract}
This paper explores optimality conditions in optimization problems involving generalized invex fuzzy functions. We extend the classical KKT framework to settings in which the objective and constraint functions are nonsmooth, vector-valued, and fuzzy-valued, and satisfy various generalized invexity conditions such as V-invexity, V-pseudoinvexity, or V-quasiinvexity. After reviewing key concepts from nonsmooth analysis and multiobjective optimization, we derive new KKT-type conditions under weaker assumptions than classical convexity, ensuring (weak) Pareto optimality in fuzzy environments. Our results unify and generalize earlier work by Antczak and Mishra as well as demonstrate the power of generalized invexity in establishing optimality without requiring differentiability or convexity. Several illustrative examples are included to demonstrate the applicability of the developed theory.
\end{abstract}
{\bf Keywords:} invexity, pseudoinvexity and quasiinvexity, KKT optimality condition, vector optimization, fuzzy optimization.

{\bf MSC2020:} 90C56, 49J52.   

\newpage
\section{Introduction}

In this work, we discuss invex functions, which are generalizations of convex functions. Invex functions form a broad class of functions that preserve an important property of convex functions: for an invex function defined on a set $X \subseteq \mathbb{R}^n$, a point $\boldsymbol{u} \in X$ is stationary if and only if it is a global minimum of the function.

Throughout the paper, openness of the set $X$ (denoted as $X_0$) is assumed only for the
purpose of defining Clarke subdifferentials and invexity-type properties.
Pareto optimality and scalarization results are stated for the feasible set
$X$, which is assumed to be convex and closed to guarantee nonemptiness of the Pareto set. Local Lipschitz continuity ensures continuity, but the existence of Pareto optimal solutions requires a global condition such as compactness or coercivity.

We begin by defining invexity for single-valued, nonsmooth functions and then extend the concept to vector-valued nonsmooth functions. These generalizations are essential for establishing Karush–Kuhn–Tucker (KKT) type optimality conditions under weaker assumptions than convexity or differentiability.

The paper further explores how generalized notions of invexity, such as pseudoinvexity, quasiinvexity, and their vector-valued analogues (V-invexity, V-pseudoinvexity, and V-quasiinvexity), can be used in fuzzy optimization frameworks. Our goal is to derive new KKT-type optimality conditions ensuring (weak) Pareto optimality in fuzzy environments, where traditional convexity-based conditions may fail.

The structure of the paper is as follows: Section 2 provides essential preliminaries from nonsmooth analysis. Section 3 introduces single-valued invex functions. In Section 4, we extend these concepts to vector-valued functions and define various forms of vector invexity. Section 5 covers fuzzy optimization problems, including definitions and properties of fuzzy-valued functions. Section 6 presents the main results, particularly the generalized KKT optimality conditions under different assumptions of invexity. Finally, conclusions are drawn in Section 7.

\section{Preliminaries}
This section reviews fundamental concepts from nonsmooth analysis relevant to functions that may be nondifferentiable or nonconvex. At points where the classical gradient does not exist or is discontinuous, we use Clarke’s generalized directional derivative and the associated Clarke subdifferential.

We begin by recalling local and global Lipschitz continuity for scalar- and vector-valued functions. Throughout, $\| \cdot \|$ denotes the Euclidean norm, $B(\boldsymbol{x}; \delta)$ the open ball of radius $\delta$ centered at $\boldsymbol{x}$, and $\operatorname{conv}(\cdot)$ the convex hull.

\begin{maar}\cite{BaKaMa}\label{def:LLC}
A function $f:\mathbb{R}^{n}\to\mathbb{R}$ is \emph{locally Lipschitz continuous (LLC) at} $\boldsymbol x\in\mathbb{R}^{n}$ if there exist constants $L_{\boldsymbol x}>0$ and $\delta_{\boldsymbol x}>0$ such that  
\[
  |f(\boldsymbol y)-f(\boldsymbol z)| \le L_{\boldsymbol x}\,\|\boldsymbol y-\boldsymbol z\|
  \quad\forall\ \boldsymbol y,\boldsymbol z\in B(\boldsymbol x;\delta_{\boldsymbol x}).
\]
It is \emph{LLC on $\mathbb{R}^{n}$} if it is LLC at every $\boldsymbol x\in\mathbb{R}^{n}$.
\end{maar}

\begin{maar}\cite{BaKaMa}\label{def:LC}
A function $f:\mathbb{R}^{n}\to\mathbb{R}$ is \emph{Lipschitz continuous (LC)} if there exists $L>0$ such that
\[
  |f(\boldsymbol y)-f(\boldsymbol z)| \le L\,\|\boldsymbol y-\boldsymbol z\|
  \quad\forall\ \boldsymbol y,\boldsymbol z\in\mathbb{R}^{n}.
\]
\end{maar}

\begin{maar}\cite{BaKaMa}
A vector‑valued function $\boldsymbol F:\mathbb{R}^{n}\to\mathbb{R}^{p}$ is \emph{LLC} (resp.\ \emph{LC}) if each of its $p$ scalar components is LLC (resp.\ LC).
\end{maar}

For an LLC function $f$, the classical directional derivative does not need to exist, but its generalized counterpart always does:

\begin{maar}\cite{Cla}\label{def:GDD}
Let $f:\mathbb{R}^{n}\to\mathbb{R}$ be LLC at $\boldsymbol x^{\ast}\in\mathbb{R}^{n}$.  
The \emph{Clarke generalized directional derivative of $f$ at $\boldsymbol x^{\ast}$ in direction $\boldsymbol d\in\mathbb{R}^{n}$} is  
\[
  f^{\circ}(\boldsymbol x^{\ast};\boldsymbol d)
  :=\limsup_{\substack{\boldsymbol y\to\boldsymbol x^{\ast}\\ t\downarrow 0}}
           \frac{f(\boldsymbol y+t\boldsymbol d)-f(\boldsymbol y)}{t}
  =\inf_{\delta>0}\ \sup_{\substack{\|\boldsymbol y-\boldsymbol x^{\ast}\|\le\delta\\0<t<\delta}}
           \frac{f(\boldsymbol y+t\boldsymbol d)-f(\boldsymbol y)}{t}.
\]
\end{maar}

Because $f$ is LLC, $f^{\circ}(\boldsymbol x^{\ast};\boldsymbol d)$ is finite for every $\boldsymbol d$.

\begin{maar}\cite{Cla}\label{def:ClarkeSubdiff}
Let $f:\mathbb{R}^{n}\to\mathbb{R}$ be LLC at $\boldsymbol x^{\ast}$.  
The \emph{Clarke subdifferential} of $f$ at $\boldsymbol x^{\ast}$ is  
\[
  \partial_{C}f(\boldsymbol x^{\ast})
  :=\bigl\{\boldsymbol\xi\in\mathbb{R}^{n}\mid
          f^{\circ}(\boldsymbol x^{\ast};\boldsymbol d)
          \ge \boldsymbol\xi^{T}\boldsymbol d
          \;\;\forall\,\boldsymbol d\in\mathbb{R}^{n}\bigr\}.
\]
Elements of $\partial_{C}f(\boldsymbol x^{\ast})$ are called \emph{Clarke subgradients}.
\end{maar}

\begin{rem}
Rademacher’s theorem implies that an LLC function is differentiable almost everywhere~\cite[Thm.\,9.60]{RW09}.  Consequently,
\begin{equation}\label{eq:ClarkeSubdiffConv}
  \partial_{C}f(\boldsymbol x^{\ast})
  =\operatorname{conv}\!\Bigl\{\!\lim_{k\to\infty}\nabla f(\boldsymbol x^{k})\;\Big|\;
      \boldsymbol x^{k}\to\boldsymbol x^{\ast},
      \ \nabla f(\boldsymbol x^{k})\text{ exists}\Bigr\},
\end{equation}
which is often convenient for practical computations.
\end{rem}

For a convex $f:\mathbb{R}^{n}\to\mathbb{R}$ the \emph{convex subdifferential} is  
\[
  \partial_{\mathrm{conv}}f(\boldsymbol x^{\ast})
  :=\bigl\{\boldsymbol\xi\in\mathbb{R}^{n}\mid
          f(\boldsymbol y)\ge f(\boldsymbol x^{\ast})
          +\boldsymbol\xi^{T}(\boldsymbol y-\boldsymbol x^{\ast})\;
          \forall\,\boldsymbol y\in\mathbb{R}^{n}\bigr\},
\]
and one has $\partial_{\mathrm{conv}}f(\boldsymbol x^{\ast})=\partial_{C}f(\boldsymbol x^{\ast})$.  
For a general LLC function $\partial_{C}f(\boldsymbol x^{\ast})\supseteq\partial_{\mathrm{conv}}f(\boldsymbol x^{\ast})$, with equality iff $f$ is convex~\cite{Cla}.

If $f$ is LLC, a necessary condition for a \emph{local} minimizer $\boldsymbol x^{\ast}$ is that it is {\sl stationary}, in other words, when 
\begin{equation}\label{eq:localOpt}
  \boldsymbol 0\in\partial_{C}f(\boldsymbol x^{\ast})
\end{equation}
holds.
For a convex (hence LLC) $f$, stationarity condition \eqref{eq:localOpt} is \emph{necessary and sufficient} for $\boldsymbol x^{\ast}$ to be a \emph{global} minimizer:
\begin{equation}\label{eq:convOpt}
  \boldsymbol 0\in\partial_{\mathrm{conv}}f(\boldsymbol x^{\ast})
  =\partial_{C}f(\boldsymbol x^{\ast})
  \;\Longleftrightarrow\;
  f(\boldsymbol x^{\ast})=\min_{\boldsymbol y\in\mathbb{R}^{n}}f(\boldsymbol y).
\end{equation}

All definitions extend verbatim to functions defined on an open set $X_{0}\subseteq\mathbb{R}^{n}$, and we adopt that convention henceforth.

\section{Single‑valued invex functions}

Invexity is a broad generalization of convexity retaining the crucial feature that (under mild assumptions), stationary points are global minima.  We treat the nonsmooth case, following \cite{Rei}.

\begin{maar}\cite{Rei}\label{def:Invex}
Let $f:X_{0}\to\mathbb{R}$ be an LLC function on the open set $X_{0}\subseteq\mathbb{R}^{n}$.  
The function $f$ is \emph{invex on $X_{0}$} if there exists a mapping
$\boldsymbol\eta:X_{0}\times X_{0}\to\mathbb{R}^{n}$ such that for all
$\boldsymbol x,\boldsymbol u\in X_{0}$ and all $\boldsymbol\xi\in\partial_{C}f(\boldsymbol u)$,
\[
  f(\boldsymbol x)-f(\boldsymbol u)\;\ge\;
  \boldsymbol\xi^T\boldsymbol\eta(\boldsymbol x;\boldsymbol u).
\] If the last inequality holds strictly, function $f$ is called \emph{strictly invex}.
\end{maar}

\vskip10pt
\noindent
Since for the LLC function $f$, we have 
\[
  f^{\circ}(\boldsymbol u;\boldsymbol d) \;=\; \max_{\xi \in \partial_C f(\boldsymbol u)} \, \boldsymbol \xi^{T} \boldsymbol d,
\] the inequality in Definition \ref{def:Invex} can equivalently be rewritten as:
$$f(\boldsymbol x)-f(\boldsymbol u)\;\ge\;f^{\circ}(\boldsymbol u;\boldsymbol\eta(\boldsymbol x;\boldsymbol u)).$$

Choosing $\boldsymbol\eta(\boldsymbol x;\boldsymbol u)=\boldsymbol u-\boldsymbol x$ recovers convexity as a special case.  In the smooth setting, $\partial_C f(\boldsymbol{u})$ is a singleton, and we have $\partial_C f(u)=\{\nabla f(u)\}$.

\begin{example}
Let $f:\mathbb{R}\to\mathbb{R}$ be
\[
  f(x)=x^{3}+x.
\]
Observe that
\[
  f'(x)=3x^{2}+1,\qquad f''(x)=6x,
\]
so $f''$ changes sign and $f$ is \emph{not} convex (indeed $f''(x)<0$ for $x<0$ and $f''(x)>0$ for $x>0$).

Define the mapping
\[
 \eta(x;u)
  \;:=\;
  \frac{f(x)-f(u)}{f'(u)}
  \;=\;
  \frac{x^{3}+x - (u^{3}+u)}{3u^{2}+1}.
\]
Note that the denominator $3u^{2}+1$ is strictly positive for all $u\in\mathbb{R}$, so $\eta$ is well-defined for every pair $(x,u)\in\mathbb{R}\times\mathbb{R}$.

Since $f$ is $C^{1}$, the Clarke subdifferential reduces to the singleton $\partial_C f(u)=\{f'(u)\}$. For this choice of $\eta$ we have, for all $x,u\in\mathbb{R}$,
\[
  f(x)-f(u)
  \;=\; f'(u)\,\eta(x;u).
\]
In particular
\[
  f(x)-f(u)\;\ge\; f'(u)\,\eta(x;u)
  \qquad\text{for all }x,u\in\mathbb{R},
\]
so $f$ is invex on $\mathbb{R}$ with respect to the mapping $\eta$ given above (indeed the inequality holds with equality).

The function $f(x)=x^{3}+x$ is nonconvex (as shown above) but has no stationary points because $f'(x)=3x^{2}+1>0$ for all $x$. The absence of stationary points avoids the obstruction that differentiable invex functions must have every stationary point a global minimizer (see Theorem \ref{thm:InvexGlobMin} and Remark \ref{remst}).
The construction $$\eta(x;u)=\dfrac{f(x)-f(u)}{f'(u)}$$ is a standard way to exhibit invexity for a given differentiable function whenever $f'(u)\neq 0$ for all $u$. This ends the analysis of the example.
\end{example}

Invex functions enjoy the following celebrated property.

\begin{theorem}\cite{Rei}\label{thm:InvexGlobMin}
Let $f:X_{0}\to\mathbb{R}$ be LLC and, for each $\boldsymbol u\in X_{0}$ and $\boldsymbol x\in X_{0}$, suppose the convex cone
\[
  K_{\boldsymbol x}(\boldsymbol u)
  :=\bigcup_{\lambda\ge 0}\bigl(\lambda\,\partial_{C}f(\boldsymbol u)\bigr)\times
     \{\lambda\bigl(f(\boldsymbol x)-f(\boldsymbol u)\bigr)\}
\]
is closed.  Then $f$ is invex on $X_{0}$ \emph{iff} every stationary point
($\boldsymbol 0\in\partial_{C}f(\boldsymbol u)$) is a global minimizer of $f$ on~$X_{0}$.
\end{theorem}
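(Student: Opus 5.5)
The forward implication is immediate. If $f$ is invex and $\boldsymbol 0\in\partial_C f(\boldsymbol u)$, we take $\boldsymbol\xi=\boldsymbol 0$ in Definition~\ref{def:Invex}. This gives $f(\boldsymbol x)-f(\boldsymbol u)\ge \boldsymbol 0^T\boldsymbol\eta(\boldsymbol x;\boldsymbol u)=0$ for every $\boldsymbol x\in X_0$. Hence $\boldsymbol u$ is a global minimizer. The closedness hypothesis is not needed in this direction.

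For the converse, assume every stationary point is a global minimizer. We must construct $\boldsymbol\eta(\boldsymbol x;\boldsymbol u)$ pointwise, i.e.\ for each fixed pair $(\boldsymbol x,\boldsymbol u)$ find $\boldsymbol\eta\in\mathbb{R}^n$ with $\boldsymbol\xi^T\boldsymbol\eta\le f(\boldsymbol x)-f(\boldsymbol u)$ for all $\boldsymbol\xi\in\partial_C f(\boldsymbol u)$. Equivalently, we need $f^\circ(\boldsymbol u;\boldsymbol\eta)\le f(\boldsymbol x)-f(\boldsymbol u)$. Write $c:=f(\boldsymbol x)-f(\boldsymbol u)$ and $S:=\partial_C f(\boldsymbol u)$, which is nonempty, convex and compact. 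We split into three cases.

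\begin{enumerate}[(i)]
\item If $c\ge 0$, take $\boldsymbol\eta=\boldsymbol 0$.
\item If $c<0$, then $\boldsymbol u$ is not a global minimizer. By hypothesis it is not stationary, so $\boldsymbol 0\notin S$.
\item In that situation, strict separation of the compact convex set $S$ from the origin yields $\boldsymbol d$ with $\boldsymbol\xi^T\boldsymbol d\le -\varepsilon<0$ for all $\boldsymbol\xi\in S$. Setting $\boldsymbol\eta:=(|c|/\varepsilon)\,\boldsymbol d$ gives $\boldsymbol\xi^T\boldsymbol\eta\le -|c|=c$ for all $\boldsymbol\xi\in S$, as required.
\end{enumerate}

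The role of the closed cone $K_{\boldsymbol x}(\boldsymbol u)$ is to phrase this alternative as a Farkas/Motzkin-type theorem, as in \cite{Rei}. Solvability of the linear system $\boldsymbol\xi^T\boldsymbol\eta\le c$ for all $\boldsymbol\xi\in S$ is equivalent, by the theorem of the alternative, to the statement that $(\boldsymbol 0,-1)$ does not belong to the closure of $K_{\boldsymbol x}(\boldsymbol u)$. Closedness lets us drop the closure. If $(\boldsymbol 0,-1)=\lambda(\boldsymbol\xi,c)$ with $\lambda\ge0$, then $\lambda>0$, $\boldsymbol\xi=\boldsymbol 0\in S$ and $c<0$. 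This is precisely a stationary point that is not a global minimizer, which contradicts the hypothesis. I would present the argument this way to match the hypothesis, and remark that the compactness of $S$ in fact makes the direct separation argument of case (iii) sufficient.

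The main obstacle is justifying the theorem of the alternative for the infinite system indexed by $S$. The hyperplane $\{(\boldsymbol\xi,t):\boldsymbol\xi^T\boldsymbol\eta-t=0\}$ must be nonvertical so that it yields an $\boldsymbol\eta$. Closedness of the cone, or equivalently compactness of $S$, guarantees this. Finally, since the choice of $\boldsymbol\eta$ is made independently for each pair $(\boldsymbol x,\boldsymbol u)$, we assemble these choices into a map $\boldsymbol\eta:X_0\times X_0\to\mathbb{R}^n$, and $f$ is invex.
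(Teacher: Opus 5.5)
The paper contains no proof of this theorem; it is quoted from \cite{Rei}. The closedness hypothesis on $K_{\boldsymbol x}(\boldsymbol u)$ reflects Reiland's route through a Farkas-type theorem of the alternative, which is the second argument you sketch. Your primary argument is correct, complete, and more elementary. The forward direction with $\boldsymbol\xi=\boldsymbol 0$ is exactly right. In the converse only the case $c<0$ needs work: there $\boldsymbol 0\notin\partial_C f(\boldsymbol u)$, and projecting the origin onto the compact convex set $\partial_C f(\boldsymbol u)$ gives an explicit mapping. Take $\boldsymbol\eta(\boldsymbol x;\boldsymbol u)=\frac{f(\boldsymbol u)-f(\boldsymbol x)}{\|\bar{\boldsymbol\xi}\|^{2}}(-\bar{\boldsymbol\xi})$, where $\bar{\boldsymbol\xi}$ is the least-norm element of $\partial_C f(\boldsymbol u)$, and $\boldsymbol\eta=\boldsymbol 0$ when $f(\boldsymbol x)\ge f(\boldsymbol u)$. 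This works since $\boldsymbol\xi^T\bar{\boldsymbol\xi}\ge\|\bar{\boldsymbol\xi}\|^{2}$ for all $\boldsymbol\xi\in\partial_C f(\boldsymbol u)$, and it needs no choice argument to assemble the map. Your route shows that the closedness hypothesis is superfluous for Clarke subdifferentials in $\mathbb R^n$. The Farkas formulation is what you need if $\partial_C f(\boldsymbol u)$ is replaced by a convex set that need not be closed; there closedness of the generated cone is a genuine restriction.

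Two side remarks in your last paragraphs are inaccurate, though the proof does not rely on them.

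First, closedness of $K_{\boldsymbol x}(\boldsymbol u)$ is not equivalent to compactness of $S$. Take $f(\boldsymbol y)=\|\boldsymbol y\|+y_1$ on $\mathbb R^2$, $\boldsymbol u=\boldsymbol 0$, $\boldsymbol x=(-1,0)^T$. Then $S$ is the closed unit disc centred at $(1,0)^T$ and $c=0$. The cone $K_{\boldsymbol x}(\boldsymbol u)=\bigl(\{\boldsymbol\xi:\xi_1>0\}\cup\{\boldsymbol 0\}\bigr)\times\{0\}$ is not closed, although $f$ is invex. Compactness of $S$ gives closedness only when $c\neq 0$, which happens to be the only case that matters.

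Second, nonverticality is not where the difficulty lies. Strictly separating $(\boldsymbol 0,-1)$ from the closed convex cone $\operatorname{cl}K_{\boldsymbol x}(\boldsymbol u)$ automatically gives a normal $(\boldsymbol a,b)$ with $b<0$, and then $\boldsymbol\eta=\boldsymbol a/|b|$ works. Closedness is used only to pass from $(\boldsymbol 0,-1)\notin K_{\boldsymbol x}(\boldsymbol u)$ to $(\boldsymbol 0,-1)\notin\operatorname{cl}K_{\boldsymbol x}(\boldsymbol u)$. Even this follows from closedness of $S$ alone: if $\lambda_k(\boldsymbol\xi_k,c)\to(\boldsymbol 0,-1)$, then $c<0$ and $\lambda_k\to 1/|c|$, so $\boldsymbol\xi_k\to\boldsymbol 0\in S$.
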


\begin{rem}\label{remst}
In the smooth case,  the cone $K_{\boldsymbol x}(\boldsymbol u)$ is always closed, so Theorem~\ref{thm:InvexGlobMin} reduces to the result of Craven and Glover~\cite{CraGlo}:  
$f$ is invex iff every stationary point is a global minimizer.
\end{rem}

Generalizing convexity further, Kaul and Kaur \cite{KaulKaur} introduced (for the smooth case) \emph{pseudo‑} and \emph{quasiinvexity}.  Reiland \cite{Rei} extended these to nonsmooth functions.

\begin{maar}\label{def:PInvex}
An LLC function $f:X_{0}\to\mathbb{R}$ is \emph{pseudoinvex} on $X_{0}$ if there exists mapping $\boldsymbol\eta:X_{0}\times X_{0}\to\mathbb{R}^{n}$ such that for all $\boldsymbol x,\boldsymbol u\in X_{0}$ there exists $\boldsymbol\xi\in\partial_{C}f(\boldsymbol u)$ with the implication
\[
  \boldsymbol\xi^T\boldsymbol\eta(\boldsymbol x;\boldsymbol u)\;\ge 0
  \;\Longrightarrow\;
  f(\boldsymbol x)\ge f(\boldsymbol u)
  \quad\bigl(\text{equivalently: }f^{\circ}(\boldsymbol u;\boldsymbol\eta(\boldsymbol x;\boldsymbol u))\ge 0\bigr).
\]
\end{maar}

\begin{maar}\label{def:QInvex}
An LLC function $f:X_{0}\to\mathbb{R}$ is \emph{quasiinvex} on $X_{0}$ if there exists mapping 
$\boldsymbol\eta:X_{0}\times X_{0}\to\mathbb{R}^{n}$ such that for all
$\boldsymbol x,\boldsymbol u\in X_{0}$ and all $\boldsymbol\xi\in\partial_{C}f(\boldsymbol u)$,
\[
  f(\boldsymbol x)\le f(\boldsymbol u)
  \;\Longrightarrow\;
  \boldsymbol\xi^T\boldsymbol\eta(\boldsymbol x;\boldsymbol u)\le 0
  \quad\bigl(\text{equivalently: }f^{\circ}(\boldsymbol u;\boldsymbol\eta(\boldsymbol x;\boldsymbol u))\le 0\bigr).
\]
\end{maar}
Note that due to the definitions, every invex function is also pseudoinvex and quasiinvex with the same $\eta$. Every pseudoinvex function is also invex (Theorem \ref{thm:InvexGlobMin}), however, it is not necessarily invex w.r.t. the same $\eta$.  Every LLC function is trivially quasiinvex by taking $\boldsymbol\eta\equiv\boldsymbol 0$.  However, pseudoinvexity, invexity, and quasiinvexity do \emph{not} coincide in general; see \cite{BaKaMa} for detailed comparisons.

\section{Vector-valued functions}
We start with considering basic elements of multiobjective (finite vector) optimization.
\subsection{Vector minimization and Pareto optimality}\label{secPar}
 Consider the following vector ($p$-objective) minimization problem (VMP)
$$\textnormal{Minimize}\ \boldsymbol F(\boldsymbol x) = (f_1(\boldsymbol x),\dots,f_p(\boldsymbol x))^T,$$
$$  \text{subject to } \boldsymbol x \in X \subseteq \mathbb R^n.$$
In this article, we consider VMPs where the feasible set $X$ is of the form
$$X=\{\boldsymbol x \in \mathbb R^n \mid g_i(\boldsymbol{x}) \leq 0,\ i=1,\dots,m\}.$$

First, we define the most fundamental principle of optimality in vector optimization, known as Pareto optimality \cite{Pareto}.
\begin{maar}\label{Par}
A point $\boldsymbol x^* \in X$ is said to be a \emph{Pareto optimal solution} of the considered VMP if there exist no $\boldsymbol x \in X$ such that
$$f_i(\boldsymbol x) \leq f_i(\boldsymbol x^*)\ \text{for all}\ i=1,\dots,p$$
and
$$f_i(\boldsymbol x) < f_i(\boldsymbol x^*)\ \textnormal{for some}\ i=1,\dots,p.$$
\end{maar}
We can modify the Pareto optimality principle a bit by requiring strict inequality for all components in the vector dominance condition. This produces a milder optimality principle known as weak Pareto optimality. 

\begin{maar}\label{wPar}
A point $\boldsymbol x^* \in X$ is said to be a \emph{weakly Pareto optimal solution} of VMP if there exists no $\boldsymbol x \in X$ such that
$$f_i(\boldsymbol x) < f_i(\boldsymbol x^*)\ \text{for all}\ i=1,\dots,p.$$
\end{maar}
Therefore, it is easy to see that every Pareto optimal solution is also weakly Pareto optimal, but the opposite may not necessarily be true.

Next, we show a necessary and sufficient condition for Pareto optimality. A similar scalarization problem was shown by Benson with convexity assumptions for the objective functions in \cite{Benson}.

Here, we use the notations
$$
\mathbb{R}^p_{\ge \boldsymbol 0} := \{ \boldsymbol{\lambda}\in\mathbb{R}^p \mid \lambda_i \ge 0,\ i=1,\dots,p \}, 
$$
and
$$
\mathbb{R}^p_{> \boldsymbol 0} := \{ \boldsymbol{\lambda}\in\mathbb{R}^p \mid \lambda_i > 0,\ i=1,\dots,p \}.
$$

\begin{theorem}\cite{Benson}
\label{sufnecpar_convex}
Let $\boldsymbol F(\boldsymbol x) = (f_1(\boldsymbol x),\dots,f_p(\boldsymbol x))^T$
be the objective vector function of a VMP, where each $f_i:\mathbb{R}^n\to\mathbb{R}$
is convex, and let the feasible set $X\subset\mathbb{R}^n$ be convex.
A feasible solution $\boldsymbol u\in X$ is weakly Pareto optimal for the VMP if and only if
there exists a \emph{fixed} weight vector
$\boldsymbol\lambda=(\lambda_1,\dots,\lambda_p)\in\mathbb{R}^p_{\ge \boldsymbol 0}$ such that
\[
\sum_{i=1}^p \lambda_i f_i(\boldsymbol u)
\;=\;
\min_{\boldsymbol x\in X}\;
\sum_{i=1}^p \lambda_i f_i(\boldsymbol x).
\]
Equivalently,
\[
\sum_{i=1}^p \lambda_i\bigl(f_i(\boldsymbol x)-f_i(\boldsymbol u)\bigr)\ge 0,
\qquad \forall\,\boldsymbol x\in X .
\]
\end{theorem}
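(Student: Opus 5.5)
The plan is to prove the two directions separately. Throughout, the weight vector is understood to be \emph{nonzero}, i.e.\ $\boldsymbol\lambda\in\mathbb{R}^p_{\ge\boldsymbol 0}\setminus\{\boldsymbol 0\}$. With $\boldsymbol\lambda=\boldsymbol 0$ the displayed inequality holds for every feasible $\boldsymbol u$, so the ``if'' part would be false. I will state this normalization explicitly at the start.

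\emph{Sufficiency.} This is elementary and uses no convexity. Assume $\boldsymbol\lambda\ge\boldsymbol 0$, $\boldsymbol\lambda\neq\boldsymbol 0$, and $\sum_i\lambda_i(f_i(\boldsymbol x)-f_i(\boldsymbol u))\ge 0$ for all $\boldsymbol x\in X$. Suppose, for contradiction, that $\boldsymbol u$ is not weakly Pareto optimal in the sense of Definition~\ref{wPar}. Then some $\boldsymbol x\in X$ has $f_i(\boldsymbol x)<f_i(\boldsymbol u)$ for every $i$. Each term $\lambda_i(f_i(\boldsymbol x)-f_i(\boldsymbol u))$ is $\le 0$, and at least one is $<0$ because some $\lambda_i>0$. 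Hence the sum is negative, which is a contradiction. The equivalence between the ``min'' formulation and the inequality formulation is immediate by rearranging.

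\emph{Necessity.} Here I would use a separation argument. Define
\[
A:=\{\boldsymbol z\in\mathbb{R}^p \mid \exists\,\boldsymbol x\in X:\ z_i>f_i(\boldsymbol x)-f_i(\boldsymbol u),\ i=1,\dots,p\}.
\]
The steps, in order, are as follows.
\begin{enumerate}[(i)]
\item $A$ is nonempty, and it is open, since it is a union of translated open orthants.
\item $A$ is convex. Take $\boldsymbol z^1,\boldsymbol z^2\in A$ with witnesses $\boldsymbol x^1,\boldsymbol x^2$, and $t\in[0,1]$. The point $t\boldsymbol x^1+(1-t)\boldsymbol x^2$ lies in $X$ by convexity of $X$. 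Convexity of each $f_i$ then gives $f_i(t\boldsymbol x^1+(1-t)\boldsymbol x^2)-f_i(\boldsymbol u)<tz^1_i+(1-t)z^2_i$.
\item Weak Pareto optimality of $\boldsymbol u$ is exactly the statement $\boldsymbol 0\notin A$.
\end{enumerate}
Separating the point $\boldsymbol 0$ from the open convex set $A$ then yields $\boldsymbol\lambda\neq\boldsymbol 0$ with $\boldsymbol\lambda^T\boldsymbol z\ge 0$ for all $\boldsymbol z\in A$.

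Next I show $\boldsymbol\lambda\ge\boldsymbol 0$. The set $A$ satisfies $A+\mathbb{R}^p_{\ge\boldsymbol 0}\subseteq A$. Fix $\boldsymbol z\in A$ and add $s\boldsymbol e_i$ with $s\to\infty$. If $\lambda_i<0$, then $\boldsymbol\lambda^T(\boldsymbol z+s\boldsymbol e_i)\to-\infty$, contradicting $\boldsymbol\lambda^T\boldsymbol z\ge 0$ on $A$. So every $\lambda_i\ge 0$.

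Finally, for any $\boldsymbol x\in X$ and $\varepsilon>0$, the vector $\boldsymbol z=\boldsymbol F(\boldsymbol x)-\boldsymbol F(\boldsymbol u)+\varepsilon\boldsymbol 1$ lies in $A$. Letting $\varepsilon\downarrow 0$ gives $\sum_i\lambda_i(f_i(\boldsymbol x)-f_i(\boldsymbol u))\ge 0$, which is the claim.

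The main obstacle is step (ii), the convexity of $A$. It is the only place where the convexity of the $f_i$ and of $X$ enters. I expect this step to fail in the later invex settings of the paper, which is why the generalized results there will need different arguments. The sign argument for $\boldsymbol\lambda$ is routine once the recession property $A+\mathbb{R}^p_{\ge\boldsymbol 0}\subseteq A$ is noted.
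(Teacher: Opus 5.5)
Your proof is correct and follows essentially the same route as the paper. For necessity, both use a supporting (separating) hyperplane for the convex set $\boldsymbol F(X)+\mathbb{R}^p_{\ge \boldsymbol 0}$; your $A$ is exactly its interior translated by $-\boldsymbol F(\boldsymbol u)$. For sufficiency, both use the elementary strict-inequality contradiction. You also supply details the paper leaves implicit: the convexity of this set, why weak Pareto optimality gives $\boldsymbol 0\notin A$, the nonnegativity of $\boldsymbol\lambda$ via $A+\mathbb{R}^p_{\ge\boldsymbol 0}\subseteq A$, and the requirement $\boldsymbol\lambda\neq\boldsymbol 0$, which the paper's proof uses but the statement omits.
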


\begin{proof}
($\Rightarrow$)
Assume that $\boldsymbol u$ is weakly Pareto optimal.
Since each $f_i$ and the feasible set $X$ are convex, the image set
    $\boldsymbol F(X)+\mathbb{R}^p_{\ge \boldsymbol 0}$ is convex.

Since the image set $\boldsymbol F(X)+\mathbb{R}^p_{\ge \boldsymbol 0}$
is convex, the existence of a nonnegative supporting vector follows from the supporting hyperplane theorem (see  \cite{Ehrgott} or \cite{Rockafellar}). That is, by the supporting hyperplane theorem there exists a nonzero vector
$\boldsymbol\lambda\in\mathbb{R}^p_{\ge \boldsymbol 0}$ such that
\[
\boldsymbol\lambda^T \boldsymbol F(\boldsymbol u)
\le
\boldsymbol\lambda^T \boldsymbol F(\boldsymbol x),
\qquad \forall\,\boldsymbol x\in X .
\]
Hence $\boldsymbol u$ minimizes the weighted sum with some
$\boldsymbol\lambda\in\mathbb{R}^p_{\ge \boldsymbol 0}\setminus\{\boldsymbol 0\}$.

($\Leftarrow$)
Assume that there exists $\boldsymbol\lambda\in\mathbb{R}^p_{\ge \boldsymbol 0}$ such that
$\boldsymbol u$ minimizes the weighted sum.
Suppose, by contradiction, that $\boldsymbol u$ is not weakly Pareto optimal.
Then there exists $\boldsymbol x\in X$ such that
\[
f_i(\boldsymbol x)<f_i(\boldsymbol u)\ \forall i.
\]
Multiplying by $\lambda_i\ge 0$, $\boldsymbol\lambda\neq \boldsymbol 0$, and summing yields
\[
\sum_{i=1}^p \lambda_i f_i(\boldsymbol x)
<
\sum_{i=1}^p \lambda_i f_i(\boldsymbol u),
\]
which contradicts the optimality of $\boldsymbol u$.
Thus, $\boldsymbol u$ is weakly Pareto optimal.
\end{proof}

If the weights satisfy $\boldsymbol\lambda\in\mathbb{R}^p_{>\boldsymbol 0}$, then the above result can only be a sufficient condition for Pareto optimality \cite{Ehrgott}.
The feasible set $X=\{\boldsymbol x\in\mathbb{R}^n \mid g_i(\boldsymbol x)\le 0,\ i=1,\dots,m\}$
does not, in general, guarantee the existence of a Pareto optimal solution.
If each constraint function $g_i$ is LLC (or more general, lower semicontinuous), then $X$ is closed.
If we assume in addition that $X$ is nonempty and compact, and that each objective
function $f_i$ is LLC on an open set containing $X$
(and hence continuous on $X$), then the vector optimization problem admits at least one Pareto optimal solution \cite{Ehrgott}.
Indeed, for any $\boldsymbol\lambda\in\mathbb{R}^p_{> \boldsymbol 0}$, the scalar function
$\sum_{i=1}^p \lambda_i f_i$ attains a minimum on $X$, and any
such minimizer is (supporting, i.e., found by linear scalarization) Pareto optimal \cite{Ehrgott}, even in nonconvex case.

\subsection{Geoffrion proper efficiency}\label{sec:geoffrion}

For the VMP introduced in Subsection~\ref{secPar}, we have already defined Pareto and weak Pareto optimality as well as presented the classical result on complete characterization of (weak) Pareto optimal solutions by means of linear scalarization in the convex case. In this subsection, we recall the concept of \emph{proper efficiency} in the sense of Geoffrion~\cite{Geoffrion} and show how such solutions can be obtained via linear scalarization.

\begin{maar}\cite{Geoffrion}\label{def:geo-proper}
Let $\boldsymbol{u}\in X$ be Pareto optimal for VMP. It is called \emph{Geoffrion properly efficient} if there exists a constant $M>0$ such that for every index $i\in\{1,\dots,p\}$ there is at least one index $j\neq i$ satisfying
\begin{equation}\label{eq:bounded-tradeoff}
  \frac{f_i(\boldsymbol u)-f_i(\boldsymbol x)}{f_j(\boldsymbol x)-f_j(\boldsymbol u)}\;\le\; M
\end{equation}
whenever $\boldsymbol x\in X$ fulfils $f_i(\boldsymbol x)<f_i(\boldsymbol u)$ and $f_j(\boldsymbol x)>f_j(\boldsymbol u)$.
\end{maar}

Condition~\eqref{eq:bounded-tradeoff} rules out arbitrarily large trade‑offs: an improvement in criterion~$i$ cannot be achieved at the cost of an unbounded deterioration in every other criterion.

The next result provides a constructive method for identifying properly efficient solutions.

\begin{theorem}\cite{Geoffrion}\label{thm:geo}
Fix strictly positive scalars $\lambda_1,\dots,\lambda_p>0$ and consider the scalarised problem
\[
  \boldsymbol{P}(\boldsymbol\lambda)\qquad
  \min_{\boldsymbol x\in X}\;\; \sum_{i=1}^{p}\lambda_i f_i(\boldsymbol x).
\]
If $\boldsymbol u$ is an optimal solution of $\boldsymbol{P}(\boldsymbol\lambda)$, then $\boldsymbol u$ is Geoffrion properly efficient for VMP. In particular, the bound in Definition~\ref{def:geo-proper} can be chosen as
\[
  M\;=\;(p-1)\,\max_{i,j}\frac{\lambda_j}{\lambda_i}.
\]
\end{theorem}

%\begin{proof}
%Optimality of $\boldsymbol u$ for $\boldsymbol{P}(\boldsymbol\lambda)$ implies Pareto optimality. Suppose, for contradiction, that~\eqref{eq:bounded-tradeoff} is violated for some $\boldsymbol x\in X$ and index $i$. Then for all $j\neq i$ with $f_j(\boldsymbol x)>f_j(\boldsymbol u)$ we have
%\[
%  f_i(\boldsymbol u)-f_i(\boldsymbol x)\;>\;M\bigl(f_j(\boldsymbol x)-f_j(\boldsymbol u)\bigr).
%\]
%By the chosen value of $M$, this inequality yields
%\[
%  \lambda_i\bigl(f_i(\boldsymbol u)-f_i(\boldsymbol x)\bigr)
%  >\sum_{j\neq i}\lambda_j\bigl(f_j(\boldsymbol x)-%f_j(\boldsymbol u)\bigr),
%\]
%which contradicts the optimality of $\boldsymbol u$ for the scalarised problem.
%\end{proof}

\subsection{V-invex}

In this section, we generalize the concept of invexity for vector-valued functions for whose component functions the $\boldsymbol \eta$-functions are not necessarily the same. In invex optimization problems, every component (partial function) of the vector-valued function is required to be invex for the same $\boldsymbol{\eta}$-function, which is a major restriction. Jeyakumar and Mond \cite{JeyMon} improved this situation by introducing the notion of V-invex functions. V-invexity and its generalizations have also been studied in \cite{MisGio}, \cite{MisMuk2}, and \cite{MisMuk3}. Bector et al. in \cite{Bec} developed sufficient optimality conditions and established duality results under the V-invexity type of assumptions on the objective and constraint functions. We first define V-invexity for vector-valued nonsmooth functions.

\begin{maar}
    \label{vinvex}\cite{JeyMon} Let $f_i : X_0 \rightarrow \mathbb R$ be LLC functions defined as before on the open set $X_{0}\subseteq\mathbb{R}^{n}$  for all $i=1,\dots,p$. A vector-valued function $$\boldsymbol F(\boldsymbol x) = (f_1(\boldsymbol x),\dots,f_p(\boldsymbol x))^T$$ is \emph{V-invex} if there exist a vector-valued mapping $\boldsymbol \eta : \mathbb X_0 \times X_0 \rightarrow \mathbb R^n$ and a vector-valued scaling mapping $\boldsymbol \beta$ with components $\beta_i : X_0 \times X_0 \rightarrow \mathbb R_{> \boldsymbol 0}$ such that for all $\boldsymbol x,\ \boldsymbol u \in X_0$, for all $i=1,\dots,p$  and for all $\boldsymbol \xi_i \in \partial_C f_i(\boldsymbol u)$ we have
$$f_i(\boldsymbol x) - f_i(\boldsymbol u) \geq \beta_i(\boldsymbol x; \boldsymbol u) \boldsymbol \xi^T_i \boldsymbol \eta(\boldsymbol x; \boldsymbol u).$$
\end{maar}

Here we can write $\boldsymbol \eta_i(\boldsymbol x; \boldsymbol u)=\beta_i(\boldsymbol x; \boldsymbol u)\boldsymbol \eta(\boldsymbol x; \boldsymbol u)  $, which shows that every $i$-th component function has a different $\boldsymbol \eta$-function, i.e. $\boldsymbol \eta_i$. It is evident that a vector-valued invex function $\boldsymbol F$ is also V-invex. To see that, it suffices to set $\beta_i = 1$ for all $i=1,\dots,p$.

It was shown by Jeyakumar and Mond \cite{JeyMon} that for a V-invex differentiable function $\boldsymbol F = (f_1,\dots,f_p)^T$ the point $\boldsymbol u \in X_0$ is a weakly Pareto optimal solution of $\boldsymbol F$ if and only if there exists $\boldsymbol \tau \in \mathbb R^p_{\ge \boldsymbol 0}\backslash \{\boldsymbol 0 \}$ such that $\sum_{i=1}^p \tau_i \nabla f_i(\boldsymbol u) = \boldsymbol 0$. 
We extend the result to nonsmooth functions.

\begin{theorem}\label{WP} Let $\boldsymbol F : X_0 \rightarrow \mathbb R^p$ be V-invex. Then $\boldsymbol u \in X_0$ is a weakly Pareto optimal solution of $\boldsymbol F$ if and only if there exist $\boldsymbol \tau \in \mathbb R_{\ge\boldsymbol 0}^p\backslash \{\boldsymbol 0 \}$ and  $\boldsymbol \xi_i \in \partial_C f_i(\boldsymbol u)$ such that
$$\sum_{i=1}^p \tau_i \boldsymbol \xi_i = \boldsymbol 0.$$

\end{theorem}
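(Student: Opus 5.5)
Theorem~\ref{WP} is an equivalence, and I will prove the two directions separately. The sufficiency direction uses V-invexity directly. The necessity direction uses a nonsmooth Fritz John/Gordan argument, which needs no invexity.

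\emph{Sufficiency.} Suppose $\boldsymbol\tau\in\mathbb R^p_{\ge\boldsymbol 0}\setminus\{\boldsymbol 0\}$ and $\boldsymbol\xi_i\in\partial_C f_i(\boldsymbol u)$ satisfy $\sum_i\tau_i\boldsymbol\xi_i=\boldsymbol 0$. Assume, for contradiction, that some $\boldsymbol x\in X_0$ has $f_i(\boldsymbol x)<f_i(\boldsymbol u)$ for all $i$. V-invexity (Definition~\ref{vinvex}) then gives
\[
\beta_i(\boldsymbol x;\boldsymbol u)\,\boldsymbol\xi_i^T\boldsymbol\eta(\boldsymbol x;\boldsymbol u)\le f_i(\boldsymbol x)-f_i(\boldsymbol u)<0 .
\]
Since $\beta_i>0$, this yields $\boldsymbol\xi_i^T\boldsymbol\eta(\boldsymbol x;\boldsymbol u)<0$ for every $i$. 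Multiplying by $\tau_i\ge 0$, with at least one $\tau_i>0$, and summing gives $\bigl(\sum_i\tau_i\boldsymbol\xi_i\bigr)^T\boldsymbol\eta(\boldsymbol x;\boldsymbol u)<0$. This contradicts $\sum_i\tau_i\boldsymbol\xi_i=\boldsymbol 0$.

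The point of this argument is that the different scalings $\beta_i$ are harmless: they only rescale each inequality separately and never enter the multiplier combination. The same argument works for the weaker hypothesis that each $f_i$ be pseudoinvex-type with a common $\boldsymbol\eta$.

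\emph{Necessity.} Let $\boldsymbol u$ be weakly Pareto optimal on the open set $X_0$, and set $\phi(\boldsymbol x):=\max_i\bigl(f_i(\boldsymbol x)-f_i(\boldsymbol u)\bigr)$. Then $\phi(\boldsymbol u)=0$ and $\phi\ge 0$ on $X_0$, since otherwise some $\boldsymbol x$ would strictly improve all objectives. Hence $\boldsymbol u$ is an unconstrained local (indeed global) minimizer of the LLC function $\phi$ on the open set $X_0$. By the stationarity condition \eqref{eq:localOpt}, $\boldsymbol 0\in\partial_C\phi(\boldsymbol u)$.

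I then invoke Clarke's max rule, $\partial_C\phi(\boldsymbol u)\subseteq\operatorname{conv}\bigcup_{i\in I(\boldsymbol u)}\partial_C f_i(\boldsymbol u)$, where $I(\boldsymbol u)$ is the set of active indices; here all indices are active because every term equals $0$ at $\boldsymbol u$. This produces convex weights $\tau_i\ge 0$ with $\sum_i\tau_i=1$ and elements $\boldsymbol\xi_i\in\partial_C f_i(\boldsymbol u)$ such that $\sum_i\tau_i\boldsymbol\xi_i=\boldsymbol 0$.

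\emph{Main obstacle.} The step needing care is justifying the max rule. It is a standard result from \cite{Cla}, but it is not stated in the paper. If a self-contained route is preferred, I would argue by contradiction using separation. Suppose $\boldsymbol 0\notin\operatorname{conv}\bigcup_i\partial_C f_i(\boldsymbol u)$; this set is compact and convex. A separating direction $\boldsymbol d$ then gives $f_i^\circ(\boldsymbol u;\boldsymbol d)=\max_{\boldsymbol\xi\in\partial_C f_i(\boldsymbol u)}\boldsymbol\xi^T\boldsymbol d<0$ for all $i$. Because $f_i(\boldsymbol u+t\boldsymbol d)-f_i(\boldsymbol u)\le t\bigl(f_i^\circ(\boldsymbol u;\boldsymbol d)+o(1)\bigr)$, a small step $t>0$ strictly decreases every $f_i$ while staying in the open set $X_0$. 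This contradicts weak Pareto optimality.
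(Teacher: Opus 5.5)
Your proof is correct, and it differs from the paper's at one point that matters. For sufficiency, the paper multiplies the V-invexity inequalities by $\tau_i$ and sums, obtaining $\sum_i\tau_i\bigl(f_i(\boldsymbol x^*)-f_i(\boldsymbol u)\bigr)\ge\bigl(\sum_i\tau_i\beta_i(\boldsymbol x^*;\boldsymbol u)\boldsymbol\xi_i\bigr)^T\boldsymbol\eta(\boldsymbol x^*;\boldsymbol u)$. It then asserts that the right-hand side vanishes because $\sum_i\tau_i\boldsymbol\xi_i=\boldsymbol 0$. That inference fails when the $\beta_i(\boldsymbol x^*;\boldsymbol u)$ differ: $\sum_i\tau_i\beta_i\boldsymbol\xi_i$ is a different combination and can make the right-hand side negative, and then there is no contradiction. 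You divide by $\beta_i>0$ before aggregating, which is equivalent to weighting the $i$-th inequality by $\tau_i/\beta_i(\boldsymbol x^*;\boldsymbol u)$. That is exactly what is needed, so in this direction your version is the sound one.

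For necessity, the paper argues by separation. If $\boldsymbol 0$ is not in the cone $\mathcal C$ generated by the $\partial_C f_i(\boldsymbol u)$, a separating direction $\boldsymbol d$ gives $f_i^\circ(\boldsymbol u;\boldsymbol d)<0$ for all $i$, hence a common descent step. That is your fallback argument, and your formulation is the more careful one. You separate $\boldsymbol 0$ from the compact convex set $\operatorname{conv}\bigcup_i\partial_C f_i(\boldsymbol u)$, which is legitimate. The paper instead calls $\mathcal C$ closed, which is false in the very case considered: $\boldsymbol 0\notin\mathcal C$, yet $\boldsymbol 0$ lies in its closure.

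Your primary route goes through $\phi=\max_i\bigl(f_i-f_i(\boldsymbol u)\bigr)$, the Fermat rule and Clarke's max rule. It is shorter and yields normalized multipliers directly, but it imports a calculus rule the paper never states. The separation route is self-contained, needing only $f^\circ(\boldsymbol u;\boldsymbol d)=\max_{\boldsymbol\xi\in\partial_C f(\boldsymbol u)}\boldsymbol\xi^T\boldsymbol d$ and the definition of $f^\circ$.

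One caveat on your side remark about pseudoinvex components. The sufficiency argument extends to them only when pseudoinvexity holds for every subgradient, i.e.\ $f_i^\circ\bigl(\boldsymbol u;\boldsymbol\eta(\boldsymbol x;\boldsymbol u)\bigr)\ge 0\Rightarrow f_i(\boldsymbol x)\ge f_i(\boldsymbol u)$. It does not extend to the ``there exists $\boldsymbol\xi$'' form of the paper's definition, because the $\boldsymbol\xi_i$ in the multiplier rule are prescribed, not chosen.
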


\begin{proof}
($\Rightarrow$) 
Assume that $\boldsymbol u$ is weakly Pareto optimal for $\boldsymbol F$ on $X_0$.
%Consider the convex set
%\[
%\mathcal{K}:=\operatorname{conv}\Bigl(\partial_C f_1(\boldsymbol u)\times\cdots\times \partial_C f_p(\boldsymbol u)\Bigr)
%\subset (\mathbb{R}^n)^p.
%\]
Define the convex cone
\[
\mathcal{C}:=\left\{\sum_{i=1}^p \tau_i \boldsymbol\xi_i \ \middle|\ 
\boldsymbol\tau\in\mathbb{R}^p_{\ge \boldsymbol 0}\backslash \{\boldsymbol 0 \},\ \boldsymbol\xi_i\in\partial_C f_i(\boldsymbol u)\right\}
\subset \mathbb{R}^n.
\]
We claim that $\boldsymbol 0\in\mathcal{C}$.

Suppose not, i.e., $\boldsymbol 0\notin\mathcal{C}$. Since $\mathcal{C}$ is a nonempty closed convex cone,
the strong separation theorem \cite{Rockafellar} yields a vector $\boldsymbol d\in\mathbb{R}^n$ such that
\[
\boldsymbol d^T \boldsymbol y < 0 \qquad \forall\, \boldsymbol y\in \mathcal{C}\setminus\{\boldsymbol 0\}.
\]
In particular, taking $\boldsymbol y=\boldsymbol\xi_i$ (choose $\tau_i=1$ and $\tau_j=0$ for $j\neq i$) gives
\[
\boldsymbol d^T \boldsymbol\xi_i < 0 \qquad \forall\, \boldsymbol\xi_i\in\partial_C f_i(\boldsymbol u),\ \forall i.
\]
Hence
\[
f_i^{\circ}(\boldsymbol u;\boldsymbol d)
= \max_{\boldsymbol\xi_i\in\partial_C f_i(\boldsymbol u)} \boldsymbol\xi_i^T\boldsymbol d
<0,\qquad i=1,\dots,p.
\]
By the definition of the Clarke directional derivative, this implies that for each $i$ there exists $\bar t_i>0$ such that
\[
f_i(\boldsymbol u+t\boldsymbol d) < f_i(\boldsymbol u) \quad \text{for all }t\in(0,\bar t_i).
\]
Let $\bar t:=\min_i \bar t_i>0$. Since $X_0$ is open, $\boldsymbol u+t\boldsymbol d\in X_0$ for all sufficiently small $t>0$.
Therefore, for such $t$ we have
\[
f_i(\boldsymbol u+t\boldsymbol d) < f_i(\boldsymbol u)\quad \forall i,
\]
which contradicts weak Pareto optimality of $\boldsymbol u$.
Thus $\boldsymbol 0\in\mathcal{C}$, i.e., there exist $\boldsymbol\tau\in\mathbb{R}^p_{\ge \boldsymbol 0}\setminus\{\boldsymbol 0\}$
and $\boldsymbol\xi_i\in\partial_C f_i(\boldsymbol u)$ such that $\sum_{i=1}^p \tau_i \boldsymbol\xi_i=\boldsymbol 0$.

Note that we have not uses assumption of invexity at all in the proof of necessity, but to prove the other way around the assumption on invexity becomes crucial.

($\Leftarrow$)
Assume that $\sum_{i=1}^p \tau_i \boldsymbol\xi_i=\boldsymbol 0$ for some
$\boldsymbol\tau\in\mathbb{R}^p_{\ge \boldsymbol 0}\setminus\{\boldsymbol 0\}$ and $\boldsymbol\xi_i\in\partial_C f_i(\boldsymbol u)$.
Suppose, by contradiction, that $\boldsymbol u$ is not weakly Pareto optimal. Then there exists $\boldsymbol x^*\in X_0$ such that
\[
f_i(\boldsymbol x^*) < f_i(\boldsymbol u)\qquad \forall i.
\]
Since $\boldsymbol F$ is V-invex, for each $i$ there exists $\beta_i(\boldsymbol x^*;\boldsymbol u)>0$ and the common mapping
$\boldsymbol \eta(\boldsymbol x^*;\boldsymbol u)$ such that
\[
f_i(\boldsymbol x^*)-f_i(\boldsymbol u)\ \ge\ \beta_i(\boldsymbol x^*;\boldsymbol u)\,\boldsymbol\xi_i^T\boldsymbol\eta(\boldsymbol x^*;\boldsymbol u).
\]
Multiplying by $\tau_i\ge 0$ and summing yields
\[
\sum_{i=1}^p \tau_i\bigl(f_i(\boldsymbol x^*)-f_i(\boldsymbol u)\bigr)
\ \ge\
\left(\sum_{i=1}^p \tau_i \beta_i(\boldsymbol x^*;\boldsymbol u)\boldsymbol\xi_i\right)^T\boldsymbol\eta(\boldsymbol x^*;\boldsymbol u).
\]
The left-hand side is strictly negative (since all terms are $<0$ and $\boldsymbol\tau\neq \boldsymbol 0$),
whereas the right-hand side equals $0$ due to the assumption.
This contradiction shows that no such $\boldsymbol x^*$ exists, hence $\boldsymbol u$ is weakly Pareto optimal.
\end{proof}

\subsection{V-pseudoinvex and V-quasiinvex functions}
In this subsection, we define the following generalizations of V-invex functions. They are analogous to the definitions of pseudo- and quasiinvexity for invex functions.

\begin{maar}\label{q}
   {\cite{MisGio}} Let $f_i : X_0 \rightarrow \mathbb R$ be LLC functions for all $i=1,\dots,p$. A function $$\boldsymbol F(\boldsymbol x) = (f_1(\boldsymbol x),\dots,f_p(\boldsymbol x))^T$$ is \emph{V-pseudoinvex} if there exist a mapping $\boldsymbol \eta : X_0 \times X_0 \rightarrow \mathbb R^n$ and a vector $\boldsymbol \beta$ with components $\beta_i : X_0 \times X_0 \rightarrow \mathbb R_{> \boldsymbol 0}$ such that for all $\boldsymbol x,\ \boldsymbol u \in X_0$  there exists $\boldsymbol \xi_i \in \partial_C f_i(\boldsymbol u)$ such that the implication
$$\sum_{i=1}^p  \boldsymbol \xi_i^T \boldsymbol \eta(\boldsymbol x; \boldsymbol u) \geq 0 \Longrightarrow \sum_{i=1}^p \beta_i(\boldsymbol x; \boldsymbol u)[f_i(\boldsymbol x) - f_i(\boldsymbol u)] \geq 0$$
holds.
\end{maar}

\begin{maar}{\cite{MisGio}}\label{qq} Let $f_i : X_0 \rightarrow \mathbb R$ be LLC functions for all $i=1,\dots,p$. A function $$\boldsymbol F(\boldsymbol x) = (f_1(\boldsymbol x),\dots,f_p(\boldsymbol x))^T$$ is \emph{V-quasiinvex} if there exist a mapping $\boldsymbol \eta : X_0 \times X_0 \rightarrow \mathbb R^n$ and a vector $\boldsymbol \beta$ with components $\beta_i : X_0 \times X_0 \rightarrow \mathbb R_{> \boldsymbol 0}$ such that for all $\boldsymbol x,\ \boldsymbol u \in X_0$ and for all $\boldsymbol \xi_i \in \partial_C f_i(\boldsymbol u)$ such that the implication
$$\sum_{i=1}^p \beta_i(\boldsymbol x; \boldsymbol u)[f_i(\boldsymbol x) - f_i(\boldsymbol u)] \leq 0 \Longrightarrow \sum_{i=1}^p \boldsymbol \xi_i^T  \boldsymbol \eta(\boldsymbol x; \boldsymbol u) \leq 0$$
holds.

It is evident that every V-invex function $\boldsymbol F$ is V-pseudoinvex and also V-quasiinvex.
\end{maar}

Note that in Definitions \ref{q} and \ref{qq}, we can conveniently put $\beta_i(\boldsymbol\cdot\boldsymbol;\boldsymbol\cdot)$ on opposite sides of the corresponding implications. The particular choices made are only driven by technical convenience and are inherited from \cite{MisGio}.  
Next, we show the relation between pseudoinvex functions and V-invex functions.
\begin{coro}
\label{pseudoisV} Let $f_i : X_0 \rightarrow \mathbb R$ be LLC functions for all $i=1,\dots,p$. Let function $\boldsymbol F = (f_1,\dots,f_p)^T$ be pseudoinvex (i.e., each $f_i$ is pseudoinvex w.r.t. same $\boldsymbol \eta$). Then $\boldsymbol F$ is also V-invex.    
\end{coro}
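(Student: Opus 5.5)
We plan to keep the common mapping $\boldsymbol\eta$ supplied by the pseudoinvexity of the components. Only the scaling factors $\beta_i(\boldsymbol x;\boldsymbol u)>0$ will be constructed, pointwise and component by component. Because $f_i^{\circ}(\boldsymbol u;\boldsymbol d)=\max_{\boldsymbol\xi\in\partial_C f_i(\boldsymbol u)}\boldsymbol\xi^T\boldsymbol d$, the V-invexity inequality of Definition~\ref{vinvex}, required for all $\boldsymbol\xi_i\in\partial_C f_i(\boldsymbol u)$, is equivalent to the single scalar inequality
\[
f_i(\boldsymbol x)-f_i(\boldsymbol u)\ \ge\ \beta_i(\boldsymbol x;\boldsymbol u)\, f_i^{\circ}\bigl(\boldsymbol u;\boldsymbol\eta(\boldsymbol x;\boldsymbol u)\bigr).
\]
So for each fixed pair $(\boldsymbol x,\boldsymbol u)$ and each $i$ we only need one positive number $\beta_i$ satisfying this. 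Since $\boldsymbol\eta$ does not depend on $i$, the resulting $\boldsymbol F$ is then V-invex with respect to $\boldsymbol\eta$ and $\boldsymbol\beta=(\beta_1,\dots,\beta_p)$.

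The key tool is the contrapositive of Definition~\ref{def:PInvex} in its equivalent form. If $f_i(\boldsymbol x)<f_i(\boldsymbol u)$, then $f_i^{\circ}(\boldsymbol u;\boldsymbol\eta(\boldsymbol x;\boldsymbol u))<0$. We then split into cases, writing $a_i:=f_i(\boldsymbol x)-f_i(\boldsymbol u)$ and $b_i:=f_i^{\circ}(\boldsymbol u;\boldsymbol\eta(\boldsymbol x;\boldsymbol u))$.
\begin{enumerate}[(i)]
\item If $a_i<0$, then $b_i<0$ by pseudoinvexity, and we set $\beta_i:=a_i/b_i>0$; this gives equality.
\item If $a_i\ge 0$ and $b_i\le 0$, we set $\beta_i:=1$; then $\beta_i b_i\le 0\le a_i$.
\item If $a_i>0$ and $b_i>0$, we set $\beta_i:=a_i/b_i>0$; this again gives equality.
\end{enumerate}
In every case the inequality above holds, and hence it holds for all Clarke subgradients simultaneously.

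The main obstacle is the remaining case $a_i=0$, $b_i>0$. Here no positive $\beta_i$ can work with the same $\boldsymbol\eta$, because $\beta_i b_i>0=a_i$. My first attempt would be to argue that this case cannot occur, or can be removed by modifying $\boldsymbol\eta$ only at such pairs $(\boldsymbol x,\boldsymbol u)$. For instance, one could take $\boldsymbol\eta(\boldsymbol x;\boldsymbol u):=\boldsymbol 0$ at those pairs. With that choice every $b_j$ becomes $0$, so the required inequalities reduce to $a_j\ge 0$ for all $j$. These hold automatically if at such pairs no component strictly decreases.

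If that fails, the natural fallback is to strengthen the reading of the hypothesis: strict pseudoinvexity, or the convention $f(\boldsymbol x)\le f(\boldsymbol u)\Rightarrow f^{\circ}(\boldsymbol u;\boldsymbol\eta)<0$ for $\boldsymbol x\neq\boldsymbol u$, would rule out this degenerate case. Alternatively, I would invoke Theorem~\ref{thm:InvexGlobMin} to replace $\boldsymbol\eta$ by an invexity kernel for each component, and then unify these kernels through the $\beta_i$. I expect this degenerate equality case to be where the real care in the argument lies. Cases (i)–(iii) above are routine.
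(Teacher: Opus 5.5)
Your cases (i)--(iii) are correct. They are in fact more careful than the paper's own argument, which simply asserts that each $f_i$ is invex with respect to $\beta_i(\boldsymbol x;\boldsymbol u)\boldsymbol\eta(\boldsymbol x;\boldsymbol u)$ for some $\beta_i>0$. That is exactly the claim that breaks in your degenerate case $f_i(\boldsymbol x)=f_i(\boldsymbol u)$, $f_i^{\circ}(\boldsymbol u;\boldsymbol\eta(\boldsymbol x;\boldsymbol u))>0$. However, none of your repairs closes that case, so there is a genuine gap precisely where you suspected one. Putting $\boldsymbol\eta(\boldsymbol x;\boldsymbol u):=\boldsymbol 0$ is admissible only if no component strictly decreases at that pair: any $j$ with $f_j(\boldsymbol x)<f_j(\boldsymbol u)$ forces $f_j^{\circ}(\boldsymbol u;\boldsymbol\eta(\boldsymbol x;\boldsymbol u))<0$. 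Strict pseudoinvexity is a different hypothesis. Separate invexity kernels from Theorem~\ref{thm:InvexGlobMin} cannot be merged by positive rescalings unless they are positively parallel.

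The gap cannot be closed, because the statement is false for $p\ge 2$. The $\beta_i$ are arbitrary positive numbers, so V-invexity at a pair $(\boldsymbol x,\boldsymbol u)$ is equivalent to the existence of a single direction $\boldsymbol d$ satisfying two requirements: $f_i^{\circ}(\boldsymbol u;\boldsymbol d)<0$ whenever $f_i(\boldsymbol x)<f_i(\boldsymbol u)$, and $f_i^{\circ}(\boldsymbol u;\boldsymbol d)\le 0$ whenever $f_i(\boldsymbol x)=f_i(\boldsymbol u)$. Common-$\boldsymbol\eta$ pseudoinvexity delivers only the first.

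Concretely, let $X_0=\mathbb R^2$, $f_1(\boldsymbol y)=y_1$, $f_2(\boldsymbol y)=\max\{0,\,|y_2|-\max\{y_1,0\}\}$ (both Lipschitz), and $\boldsymbol\eta(\boldsymbol y;\boldsymbol v):=(-1,\,-2\operatorname{sgn}(v_2))^T$ with $\operatorname{sgn}(0):=0$. Then $f_1^{\circ}(\boldsymbol v;\boldsymbol\eta)=-1$ always. If $f_2(\boldsymbol v)>0$, then $v_2\neq 0$ and $\partial_C f_2(\boldsymbol v)\subseteq\{(-s,\operatorname{sgn}(v_2))^T\mid s\in[0,1]\}$, so $f_2^{\circ}(\boldsymbol v;\boldsymbol\eta)\le -1$. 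Hence both functions are pseudoinvex with respect to this common $\boldsymbol\eta$, in the $f^{\circ}$ form and a fortiori in the ``there exists $\boldsymbol\xi$'' form.

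Now take $\boldsymbol u=\boldsymbol 0$ and $\boldsymbol x=(-1,0)^T$. Then $f_1(\boldsymbol x)-f_1(\boldsymbol u)=-1$, $f_2(\boldsymbol x)=f_2(\boldsymbol u)=0$, and $\partial_C f_1(\boldsymbol 0)=\{(1,0)^T\}$. Also $(-1,0)^T\in\partial_C f_2(\boldsymbol 0)$, as the midpoint of the limiting gradients $(-1,\pm1)^T$ from the regions $\pm y_2>y_1>0$. Any V-invexity data $\boldsymbol\eta'$, $\beta_1,\beta_2>0$ would need $-1\ge\beta_1\eta'_1$ and $0\ge-\beta_2\eta'_1$, which is impossible.

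What your case analysis does prove is a corrected statement in two situations. One is $p=1$, where your $\boldsymbol\eta:=\boldsymbol 0$ patch is legitimate. The other is when each $f_i$ is additionally quasiinvex with respect to the same $\boldsymbol\eta$: then $f_i(\boldsymbol x)=f_i(\boldsymbol u)$ forces $f_i^{\circ}(\boldsymbol u;\boldsymbol\eta)\le 0$, and your case (ii) applies.
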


\begin{proof} Since $\boldsymbol F$ is pseudoinvex for some $\boldsymbol \eta(\boldsymbol x; \boldsymbol u)$, every $f_i$ is invex for some $\boldsymbol \eta_i(\boldsymbol x; \boldsymbol u)=\beta_i(\boldsymbol x; \boldsymbol u)\boldsymbol \eta(\boldsymbol x; \boldsymbol u)$. Therefore, by the definition of V-invex functions, $\boldsymbol F$ is V-invex.
\end{proof}
\begin{theorem}\label{Vispseudo}
Let a vector-valued function 
$$\boldsymbol F = (f_1,\dots,f_p)^\top$$ 
be V-invex. Then it is pseudoinvex.
\end{theorem}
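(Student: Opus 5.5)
The plan is to read ``pseudoinvex'' for the vector function $\boldsymbol F$ in the same sense as in Corollary~\ref{pseudoisV}: every component $f_i$ is pseudoinvex in the sense of Definition~\ref{def:PInvex}, with one common mapping $\boldsymbol\eta$. The natural candidate for this common kernel is the mapping $\boldsymbol\eta$ supplied by the V-invexity of $\boldsymbol F$ (Definition~\ref{vinvex}). The idea is that the positive scaling factors $\beta_i$ cannot change the sign of $\boldsymbol\xi_i^T\boldsymbol\eta$, so they drop out of any sign-based implication.

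Fix $i\in\{1,\dots,p\}$ and $\boldsymbol x,\boldsymbol u\in X_0$. By V-invexity, for \emph{every} $\boldsymbol\xi_i\in\partial_C f_i(\boldsymbol u)$ we have
\[
f_i(\boldsymbol x)-f_i(\boldsymbol u)\ \ge\ \beta_i(\boldsymbol x;\boldsymbol u)\,\boldsymbol\xi_i^T\boldsymbol\eta(\boldsymbol x;\boldsymbol u),
\qquad \beta_i(\boldsymbol x;\boldsymbol u)>0 .
\]
Definition~\ref{def:PInvex} only asks for the implication with \emph{some} $\boldsymbol\xi_i\in\partial_C f_i(\boldsymbol u)$, so we may choose it freely. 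Since $f_i$ is LLC, $\partial_C f_i(\boldsymbol u)$ is nonempty and compact. We therefore take $\boldsymbol\xi_i$ attaining the maximum in $f_i^{\circ}(\boldsymbol u;\boldsymbol\eta(\boldsymbol x;\boldsymbol u))=\max_{\boldsymbol\xi\in\partial_C f_i(\boldsymbol u)}\boldsymbol\xi^T\boldsymbol\eta(\boldsymbol x;\boldsymbol u)$. This choice also makes the implication match the equivalent formulation via $f_i^{\circ}$ stated in Definition~\ref{def:PInvex}.

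With this $\boldsymbol\xi_i$, suppose $\boldsymbol\xi_i^T\boldsymbol\eta(\boldsymbol x;\boldsymbol u)\ge 0$. Multiplying by $\beta_i(\boldsymbol x;\boldsymbol u)>0$ keeps the product nonnegative, and the V-invexity inequality then gives $f_i(\boldsymbol x)-f_i(\boldsymbol u)\ge 0$. This is exactly the pseudoinvexity implication for $f_i$ with kernel $\boldsymbol\eta$. Because $\boldsymbol\eta$ does not depend on $i$, all components are pseudoinvex with respect to the same $\boldsymbol\eta$, and hence $\boldsymbol F$ is pseudoinvex.

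The argument is essentially immediate, so the main obstacle is one of interpretation rather than computation. One must fix the meaning of ``pseudoinvex'' for a vector function, namely componentwise pseudoinvexity with a common $\boldsymbol\eta$ as in Corollary~\ref{pseudoisV}. One must also observe that it is the common $\boldsymbol\eta$ itself, and not the rescaled kernels $\boldsymbol\eta_i=\beta_i\boldsymbol\eta$, that serves as the pseudoinvexity kernel. Strict positivity of $\beta_i$ is precisely what permits this. I would add a remark that, together with Corollary~\ref{pseudoisV}, this shows the two notions coincide.
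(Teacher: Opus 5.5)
Your proof is correct and is essentially the paper's argument: strict positivity of $\beta_i$ lets you drop it from the V-invexity inequality, which makes each $f_i$ pseudoinvex with the common $\boldsymbol\eta$. The paper states the implication for every $\boldsymbol\xi_i\in\partial_C f_i(\boldsymbol u)$ and then specializes to some, while your maximizing choice of $\boldsymbol\xi_i$ gives the $f_i^{\circ}$ form directly, which amounts to the same thing. I would leave out your closing remark that the two notions coincide, because the converse (Corollary~\ref{pseudoisV}) is not actually secured: under the literal ``there exists $\boldsymbol\xi$'' wording of Definition~\ref{def:PInvex}, $f(x)=-|x|$ is pseudoinvex (take $\eta(x;u)=1$ for $u\ge 0$, $\eta(x;u)=-1$ for $u<0$, and $\xi=-1$ or $\xi=1$ respectively, so the premise $\xi\eta\ge 0$ never holds), yet it is not invex, since $0\in\partial_C f(0)$ while $0$ is a global maximizer.
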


\begin{proof}
Assume that $F(\boldsymbol x)$ is V-invex. Then by Definition \ref{vinvex} we have for all $i=1,\dots,p$
$$f_i(\boldsymbol x) - f_i(\boldsymbol u) \geq \beta_i(\boldsymbol x, \boldsymbol u) \boldsymbol (\boldsymbol \xi_i)^\top \boldsymbol \eta(\boldsymbol x, \boldsymbol u),\ \forall \boldsymbol{x}, \boldsymbol{u} \in \mathbb R^n, \boldsymbol{\xi}_i \in \partial f_i(\boldsymbol{u}).$$
It follows that for all $i=1,\dots,p$ we have
$$\beta_i(\boldsymbol x, \boldsymbol u) (\boldsymbol \xi_i)^\top \boldsymbol \eta(\boldsymbol x, \boldsymbol u) \geq 0 \Rightarrow f_i(\boldsymbol x) - f_i(\boldsymbol u) \geq 0.$$
Since $\beta_i(\boldsymbol x, \boldsymbol u) > 0,\ \forall \boldsymbol{x}, \boldsymbol{u} \in \mathbb R^n$, the above is equivalent to
$$(\boldsymbol \xi_i)^\top \boldsymbol \eta(\boldsymbol x, \boldsymbol u) \geq 0 \Rightarrow f_i(\boldsymbol x) - f_i(\boldsymbol u) \geq 0$$
for all $\boldsymbol\xi_i$ and that is for some.
Thus every $f_i,\ i=1,\dots,p$ is pseudoinvex with respect to the same $\boldsymbol{\eta}$.
\end{proof}
\begin{rem}\label{Visquasi}
It can analogously be shown that a V-invex function $\boldsymbol F$ is also quasiinvex (i.e., each $f_i$ is quasiinvex).
\end{rem}

The following example shows that a V-pseudoinvex or a V-quasiinvex function is not necessarily V-invex.

\begin{example} Let us define $f_1(x) = -x^3$ and $f_2(x) = -x$. Let $\eta(x,u) = x-u$. Functions $f_1$ and $f_2$ are decreasing, as $f_1'(x) = -3x^2 \leq 0$ and $f_2'(x) = -1 <0,\ x \in \mathbb R$. Therefore the sum of these functions is also decreasing. Thus we have $\sum_{i=1}^2 f_i' \eta(x,u) = (-3x^2-1)(x-u) \geq 0 \Leftrightarrow x-u \leq 0 \Leftrightarrow x \leq u \Leftrightarrow -x^3 -x + u^3 + u = (-x^3 + u^3) + (-x + u) = (f_1(x) - f_1(u)) + (f_2(x) - f_2(u)) \geq 0$. Therefore function $F = (f_1(x), f_2(x))$ is V-pseudoinvex with respect to $\eta(x,u) = x-u$. We also have $(f_1(x) - f_1(u)) + (f_2(x) - f_2(u)) \leq 0 \Leftrightarrow x \geq u \Leftrightarrow x-u \geq 0 \Leftrightarrow (-3x^2-1)(x-u) \leq 0$, so the function is also V-quasiinvex. However, since $-x^3$ has a saddle point at $x=0$, it is not invex for any $\eta$, and therefore $F=(-x^3, -x)$ is not V-invex.
\end{example}

The sum of two invex functions is invex. However, the sum of two pseudoinvex or quasiinvex functions is not necessarily pseudoinvex or quasiinvex. Let us illustrate those facts with the following examples.

\begin{example} Let us define $f_1(x) = 3x$ and $f_2(x) = -e^x,\ x \in \mathbb R$. The function $f_1(x)$ is convex and therefore pseudoinvex for $\eta(x,u) = x - u$. Function $f_2(x)$ is strictly decreasing, as $f_2'(x) = -e^x < 0,\ x \in \mathbb R^n$. We have $$f_2'(u)\eta(x,u) \geq 0 \Leftrightarrow \eta(x,u) \leq 0;$$ 
$$\eta(x,u) = x - u \leq 0 \Leftrightarrow x \leq u \Leftrightarrow f_2(x) \geq f_2(u),$$ so $f_2(x)$ is pseudoinvex with respect to $\eta(x,u) = x - u$. The sum $z = f_1 + f_2 = 3x - e^x$. We have $z' = 3 - e^x \geq 0 \Leftrightarrow e^x \leq 3 \Leftrightarrow x \leq \textnormal{ln}(3)$. Therefore, $z$ has a maximum point at $x = \textnormal{ln}(3)$ and it is not pseudoinvex for any $\eta(x,u)$.
\end{example}

\begin{example} Let us define $f_1(x) = x^2$ and $f_2(x) = -x^3,\ x \in \mathbb R$. The function $f_1(x)$ is convex and therefore also quasiinvex for $\eta(x,u) = x - u$. The function $f_2(x)$ is quasiconvex and continuously differentiable, and therefore it is $f^o$-quasiconvex and quasiinvex for $\eta(x,u) = x - u$. The sum $z = f_1 + f_2 = x^2 - x^3$. We have $z = x^2 - x^3 = 0 \Leftrightarrow x = 0$ or $x = 1$. However, we have $z(\frac{2}{3}) = 0.148 > 0$ and $0 < \frac{2}{3} < 1$, and therefore $z$ is not quasiconvex or quasiinvex for $\eta(x,u) = x - u$.
\end{example}

The following examples from \cite{BhaGar} show that a V-pseudoinvex function is not necessarily V-quasiinvex and vice versa.
\begin{example} Let $f_1$ and $f_2$ be real-valued functions defined on an interval $x \in [0,1]$ as follows:
$$f_1(x) = \begin{cases} -6x^2, & -1 \leq x \leq 0 \\
x, & 0 \leq x \leq 1
\end{cases}
\ \textnormal{and}\ 
f_2(x) = \begin{cases} 7x^2 + 9x^6, & -1 \leq x \leq 0 \\
x, & 0 \leq x \leq 1
\end{cases}$$
Here $\partial f_1(0) = \partial f_2(0) = \{\xi : 0 \leq \xi \leq 1\}$. Let us define $\eta(x,u) = 1 - 2x^2 + u,\ \beta_1(x,u) = x^2 + 1$ and $\beta_2(x,u) = u^2 + 1$. The vector function $F(x) = (f_1(x),f_2(x))$ is V-pseudoinvex at $u=0$, because 
$$\beta_1(x,0)f_1(x) + \beta_2(x,0)f_2(x) \geq \beta_1(x,0)f_1(0) + \beta_2(x,0)f_2(0) = 0,$$
for all $x \in [0,1]$. However, it is not V-quasiinvex as at $u = 0$ and $x = -\sqrt{1/3}$ we have 
$$\beta_1(-\sqrt{1/3},0)f_1(-\sqrt{1/3}) + \beta_2(-\sqrt{1/3},0)f_2(-\sqrt{1/3})$$
$$= \beta_1(-\sqrt{1/3},0)f_1(0) + \beta_2(-\sqrt{1/3},0)f_2(0) = 0,$$
but the condition
$$(\xi_1 + \xi_2)^\top\eta(x,u) \leq 0$$
is not realized for every $\xi_1 \in \partial f_1(0)$ and $\xi_2 \in \partial f_2(0)$.
\end{example}

\begin{example} Let $f_1$ and $f_2$ be real valued functions defined on an interval $x \in [-1,1]$ as follows:

$$f_1(x) = \begin{cases} x^2, & -1 \leq x \leq 0 \\
x, & 0 \leq x \leq 1
\end{cases}
$$and$$ 
f_2(x) = \begin{cases} -3x^2, & -1 \leq x \leq 0 \\
x, & 0 \leq x \leq 1.
\end{cases}$$
Here $\partial f_1(0) = \partial f_2(0) = \{\xi : 0 \leq \xi \leq 1\}$. Let us define $\eta(x,u) = x^2 - 1 + u,\ \beta_1(x,u) = x^2 + 1$ and $\beta_2(x,u) = u^2 + 1$. We have
$$\beta_1(x,0)f_1(x) + \beta_2(x,0)f_2(x) \leq \beta_1(x,0)f_1(0) + \beta_2(x,0)f_2(0) = 0,$$
when $-1 \leq x \leq 0$. When $-1 \leq x \leq 0$, we have $(\xi_1 + \xi_2)\eta(x,0) \leq 0$, for all $\xi_1 \in \partial f_1(0),\ \xi_2 \in \partial f_2(0)$. Therefore the vector function $F(x) = (f_1(x),f_2(x))$ is V-quasiinvex at $u = 0$. However, it is not V-pseudoinvex as at $u = 0$ and $x = -1$ we have
$$(\xi_1 + \xi_2)^\top\eta(-1,0) = 0,\ \forall \xi_1 \in \partial f_1(0),\ \xi_2 \in \partial f_2(0),$$
but
$$\beta_1(-1,0)f_1(-1) + \beta_2(-1,0)f_2(-1)$$
$$< \beta_1(-1,0)f_1(0) + \beta_2(-1,0)f_2(0) = 0.$$
\end{example}

\section{Fuzzy optimization problems }
Traditional optimization techniques have long been successful for well-defined deterministic problems in which both the objective function and the constraint system are clearly stated and can be treated with exact mathematics.  However, in practice, most real-world situations, whether social, industrial, or economic, are not deterministic.  They exhibit diverse forms of uncertainty, including randomness, imprecision, ambiguity, and linguistic vagueness.  Such uncertainty may arise from measurement error, scarce or noisy historical data, incomplete theoretical models, imprecise knowledge representation, or the subjectivity and preferences of human judgment \cite{Zim}.

Usually, variables that can be assigned exact values are termed \emph{crisp}. In contrast, when the precise quantification of parameters or variables is impossible, they are regarded as \emph{fuzzy} and usually described by fuzzy membership functions.  Since Bellman and Zadeh’s seminal paper on decision making in fuzzy environments~\cite{BeZa}, the theory and methodology of fuzzy optimization have remained an active and fruitful research area.

\subsection{Fuzzy numbers and fuzzy–valued functions}

\begin{maar}
A \emph{fuzzy subset} (or \emph{fuzzy set}) of\/ $\mathbb{R}^{n}$ is a mapping
\[
    \widetilde{u} : S \longrightarrow [0,1],
    \qquad S\subseteq\mathbb{R}^{n},
\]
whose value $\widetilde{u}(\boldsymbol x)$ is the \emph{degree of membership} of
$\boldsymbol x\in S$.
\end{maar}

\begin{maar}
For $\alpha\in(0,1]$ the \emph{$\alpha$–level set} (or \emph{$\alpha$–cut}) of a fuzzy set
$\widetilde{u}$ is
\[
    [\widetilde{u}]^{\alpha}
    :=\bigl\{\boldsymbol x\in\mathbb{R}^{n}\mid \widetilde{u}(\boldsymbol x)\ge\alpha\bigr\}.
\]
When $\alpha=0$ we write $[\widetilde{u}]^{0}$ for the closure of
$\bigcup_{\alpha>0}[\widetilde{u}]^{\alpha}$.
\end{maar}

A fuzzy number is a special type of fuzzy set defined on the real line $\mathbb R$ with additional properties such as normality, convexity, upper semicontinuity, and compact support \cite{Zim}.

\begin{example}\label{ex:TFN}
A \emph{triangular fuzzy number}\/
$\widetilde{u}=(u_{1},u_{2},u_{3})$ with $u_{1}\le u_{2}\le u_{3}$ has membership
function
\[
    \widetilde{u}(x)=
    \begin{cases}
        \dfrac{x-u_{1}}{u_{2}-u_{1}}, & u_{1}\le x\le u_{2},\\[6pt]
        \dfrac{u_{3}-x}{u_{3}-u_{2}}, & u_{2}\le x\le u_{3},\\[6pt]
        0, & \text{otherwise.}
    \end{cases}
\]
Its $\alpha$–level set is the closed interval
\[
    [\widetilde{u}]^{\alpha}
    =[\,\widetilde{u}^{L}(\alpha),\;\widetilde{u}^{R}(\alpha)\,]
    =\bigl[(1-\alpha)u_{1}+\alpha u_{2},\; (1-\alpha)u_{3}+\alpha u_{2}\bigr],
    \qquad \alpha\in[0,1].
\]
\end{example}

Figure \ref{Fig1} illustrates Example \ref{ex:TFN} with $\alpha=0.6$.
\begin{figure}[htbp]\label{Fig1}
\centering
\begin{tikzpicture}
  \begin{axis}[
    width=11cm, height=6cm,
    axis lines=left,
    xlabel={$x$}, ylabel={$\mu_{\widetilde{u}}(x)$},
    ymin=-0.25, ymax=1.1,   % extra space below
    xmin=0, xmax=7,
    ytick={0,1},
    xtick=\empty,
    clip=false
  ]
    % --- parameters ---
    \def\uone{1}
    \def\utwo{3}
    \def\uthree{6}
    \def\alpha{0.6}
    % alpha-cut endpoints
    \pgfmathsetmacro{\xL}{(1-\alpha)*\uone + \alpha*\utwo}
    \pgfmathsetmacro{\xR}{(1-\alpha)*\uthree + \alpha*\utwo}

    % triangular membership function
    \addplot[very thick] coordinates {(\uone,0) (\utwo,1) (\uthree,0)};

    % guides
    \addplot[densely dashed] coordinates {(\uone,0) (\uone,0.02)};
    \addplot[densely dashed] coordinates {(\utwo,0) (\utwo,1)};
    \addplot[densely dashed] coordinates {(\uthree,0) (\uthree,0.02)};

    % base labels placed in the negative y region
    \node at (axis cs:\uone,-0.06) {$u_1$};
    \node at (axis cs:\utwo,-0.06) {$u_2$};
    \node at (axis cs:\uthree,-0.06) {$u_3$};

    % alpha line and markers
    \addplot[densely dashed, domain=\xL:\xR] {\alpha};
    \addplot[only marks, mark=*] coordinates {(\xL,\alpha) (\xR,\alpha)};

    % endpoint labels above markers to avoid collisions
    \node[above left=2pt]  at (axis cs:\xL,\alpha) {$\widetilde{u}^{L}(\alpha)$};
    \node[above right=2pt] at (axis cs:\xR,\alpha) {$\widetilde{u}^{R}(\alpha)$};
    \node[left]            at (axis cs:0,\alpha) {$\alpha$};

    % alpha-cut brace fully below axis (never overlaps)
\draw[decorate, decoration={brace, amplitude=6pt}]
  (axis cs:\xL,-0.16) -- node[below=6pt, xshift=-10pt] {$[\widetilde{u}]^{\alpha}$} (axis cs:\xR,-0.16);
    % optional: tiny formulas even lower (comment out if unwanted)
    % \node[below=18pt, font=\small] at (axis cs:\xL,-0.16) {$(1-\alpha)u_1+\alpha u_2$};
    % \node[below=18pt, font=\small] at (axis cs:\xR,-0.16) {$(1-\alpha)u_3+\alpha u_2$};
  \end{axis}
\end{tikzpicture}
\caption{Triangular fuzzy number $\widetilde{u}=(u_1,u_2,u_3)$ and its $\alpha$–level set $[\widetilde{u}]^{\alpha}$.}
\label{fig:TFN}
\end{figure}
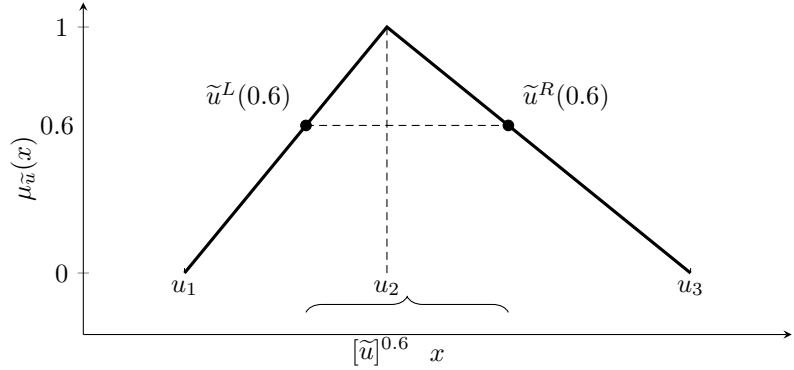

\begin{maar}
Let $X\subseteq\mathbb{R}^{n}$ be non‑empty.  
A mapping
\[
    \widetilde{f}:X \longrightarrow \mathcal{F}(\mathbb{R}),
\]
where $\mathcal{F}(\mathbb{R})$ denotes the family of fuzzy intervals in $\mathbb R$, i.e. the family of compact and convex fuzzy sets on $\mathbb{R}$, is
called a \emph{fuzzy‑valued function}.  
For each $\alpha\in[0,1]$ and $\boldsymbol x\in X$ its $\alpha$‑cut is the interval  
\[
    \widetilde{f}_{\alpha}(\boldsymbol x)
    :=\bigl[f^{L}(\boldsymbol x,\alpha),\,f^{R}(\boldsymbol x,\alpha)\bigr],
\]
where
\[
    f^{L}(\boldsymbol x,\alpha):=\inf \widetilde{f}_{\alpha}(\boldsymbol x),
    \qquad
    f^{R}(\boldsymbol x,\alpha):=\sup \widetilde{f}_{\alpha}(\boldsymbol x),
\]
and $f^{L}(\boldsymbol x,\alpha)\le f^{R}(\boldsymbol x,\alpha)$ for every
$\boldsymbol x\in X$, $\alpha\in[0,1]$.  
The functions $f^{L}$ and $f^{R}$ are called the \emph{left‑} and
\emph{right‑endpoint functions} of $\widetilde{f}$, respectively.
\end{maar}

\begin{example}\label{ex:fuzzyLinear}
Let $\widetilde{3}=(1,3,5)$ be the triangular fuzzy number from
Example~\ref{ex:TFN}.  
Define the fuzzy‑valued function
\[
    \widetilde{f}(\boldsymbol x)
    :=\widetilde{3}\,x_{1}+x_{2},
    \qquad \boldsymbol x=(x_{1},x_{2})^{T}\in\mathbb{R}^{2}.
\]
For $x_{1}\ge 0$ its $\alpha$‑cut is
\[
    \widetilde{f}_{\alpha}(\boldsymbol x)
    =\bigl[(1+2\alpha)x_{1}+x_{2},\; (5-2\alpha)x_{1}+x_{2}\bigr],
    \qquad \alpha\in[0,1],
\]
while the endpoints swap when $x_{1}<0$.
\end{example}

\begin{maar}\label{def:fuzzyInvex}
Let $X_0\subseteq\mathbb{R}^{n}$ be open, and let
$\widetilde{f}:X_0\to\mathcal{F}(\mathbb{R})$ be LLC (i.e., both $f^L$ and $f^R$ are LLC).  
Fix $\boldsymbol u\in X_0$.  
If there exists a vector field
$\boldsymbol\eta:X_0\times X_0\to\mathbb{R}^{n}$ such that, for every
$\alpha\in[0,1]$ and every $\boldsymbol x\in X_0\setminus\{\boldsymbol u\}$,
\begin{align}
    f^{L}(\boldsymbol x,\alpha)-f^{L}(\boldsymbol u,\alpha)
    &\;\ge\;
      \boldsymbol\xi_{L}^{T}\boldsymbol\eta(\boldsymbol x;\boldsymbol u)
      \quad\forall\,
      \boldsymbol\xi_{L}\in\partial_C f^{L}(\boldsymbol u,\alpha),\label{ineq:leftInvex}\\
    f^{R}(\boldsymbol x,\alpha)-f^{R}(\boldsymbol u,\alpha)
    &\;\ge\;
      \boldsymbol\xi_{R}^{T}\boldsymbol\eta(\boldsymbol x;\boldsymbol u)
      \quad\forall\,
      \boldsymbol\xi_{R}\in\partial_C f^{R}(\boldsymbol u,\alpha),\label{ineq:rightInvex}
\end{align}
then the fuzzy-valued function  $\widetilde{f}$ is called an \emph{invex fuzzy function} at $\boldsymbol u$.
If the strict versions of \eqref{ineq:leftInvex}–\eqref{ineq:rightInvex} hold
(“$>$” in place of “$\ge$”) the function is called \emph{strictly invex} at
$\boldsymbol u$.  
When the (strict) inequalities are satisfied for every
$\boldsymbol u\in X_0$, $\widetilde{f}$ is (strictly) \emph{invex on $X_0$}.
\end{maar}

\subsection{Fuzzy‑valued optimization}\label{subFO}
Consider the fuzzy‑valued optimization problem (FOP)
\[
\begin{aligned}
  \text{Minimize} \quad & \widetilde f(\boldsymbol x)\\
  \text{subject to} \quad & g_{j}(\boldsymbol x)\le 0,\quad j\in J:=\{1,\dots,m\},\\
  \quad & h_{k}(\boldsymbol x)=0,\quad k\in K:=\{1,\dots,r\},\\
  & \boldsymbol x\in\mathbb R^{n},
\end{aligned}
\]
where the objective $\widetilde f:\mathbb R^{n}\to\mathcal F(\mathbb R)$ is
fuzzy‑valued, and the constraints
$g_{j}:\mathbb R^{n}\to\mathbb R$ and $h_{k}:\mathbb R^{n}\to\mathbb R$ are real‑valued.  

Let
\[
  X:=\bigl\{\boldsymbol x\in\mathbb R^{n}\mid g_{j}(\boldsymbol x)\le 0
      \text{ for all }j\in J\ \text{and}\ h_{k}(\boldsymbol x)=0
      \text{ for all }k\in K\bigr\}
\]
denote as earlier the feasible set.

\begin{maar}
A feasible point $\boldsymbol u\in X$ is a \emph{weakly \(\tilde f\)-nondominated}
solution of the FOP if there exists no $\boldsymbol x\in X$ satisfying
$$
  \bigl\{f^{L}(\boldsymbol x,\alpha)<f^{L}(\boldsymbol u,\alpha)\text{ and }
        f^{R}(\boldsymbol x,\alpha)\le f^{R}(\boldsymbol u,\alpha)\bigr\} \text{ for all
$\alpha\in[0,1],$}$$
  \;\text{or}\;\\[2pt]
  $$
  \bigl\{f^{L}(\boldsymbol x,\alpha)\le f^{L}(\boldsymbol u,\alpha)\text{ and }
        f^{R}(\boldsymbol x,\alpha)<f^{R}(\boldsymbol u,\alpha)\bigr\}\text{ for all
$\alpha\in[0,1]$}.$$
\end{maar}
\begin{maar}
A feasible point $\boldsymbol u\in X$ is an \emph{\(\tilde f\)-nondominated} solution
of the FOP if there exists no $\boldsymbol x\in X$ such that, for all $\alpha\in[0,1]$,\\
either
\[
  \bigl\{f^{L}(\boldsymbol x,\alpha)<f^{L}(\boldsymbol u,\alpha)\text{ and }
        f^{R}(\boldsymbol x,\alpha)\le f^{R}(\boldsymbol u,\alpha)\bigr\},
\]        
  or
  \[
  \bigl\{f^{L}(\boldsymbol x,\alpha)\le f^{L}(\boldsymbol u,\alpha)\text{ and }
        f^{R}(\boldsymbol x,\alpha)<f^{R}(\boldsymbol u,\alpha)\bigr\}.
\]
Every \(\tilde f\)-nondominated solution is, by definition, weakly \(\tilde f\)-nondominated.
\end{maar}

Using a suitable ordering of the $\alpha$‑cuts
$\widetilde f_{\alpha}(\boldsymbol x)=\bigl[f^{L}(\boldsymbol x,\alpha),
f^{R}(\boldsymbol x,\alpha)\bigr]$, the minimization of the fuzzy objective $\tilde f$
over~$X$ can be recast, for each fixed $\alpha\in[0,1]$, as the biobjective
problem (VMP\(_{\alpha}\))

\[
\begin{aligned}
  \text{Minimize} \quad & \bigl(f^{L}(\boldsymbol x,\alpha),
                                 f^{R}(\boldsymbol x,\alpha)\bigr)\\
  \text{subject to} \quad & \boldsymbol x \in X \subseteq \mathbb R^n.
\end{aligned}
\]

%\begin{maar}
%A point $\boldsymbol u\in D$ is a \emph{weakly Pareto optimal} solution of
%\(\text{VMP}_{\alpha}\) if there is no $\boldsymbol x\in D$ with
%\(
%  f^{L}(\boldsymbol x,\alpha)<f^{L}(\boldsymbol u,\alpha)
%\)
%and
%\(
%  f^{R}(\boldsymbol x,\alpha)<f^{R}(\boldsymbol u,\alpha).
%\)
%\end{maar}

%\begin{maar}
%A point $\boldsymbol u\in D$ is a \emph{Pareto optimal} solution of
%\(\text{VMP}_{\alpha}\) if there is no $\boldsymbol x\in D$ such that, simultaneously,
%\[
%  \bigl\{ f^{L}(\boldsymbol x,\alpha)<f^{L}(\boldsymbol u,\alpha)
%  \text{ and } f^{R}(\boldsymbol x,\alpha)\le f^{R}(\boldsymbol u,\alpha) \bigr\},
%\] 
%  or
%\[ \bigl\{
%  f^{L}(\boldsymbol x,\alpha)\le f^{L}(\boldsymbol u,\alpha)
%  \text{ and }
%  f^{R}(\boldsymbol x,\alpha)<f^{R}(\boldsymbol u,\alpha) \bigr\}.
%\]
%\end{maar}
The following propositions link fuzzy nondominance with Pareto optimality for \(\text{VMP}_{\alpha}\).
\begin{prop}\cite{Ant}\label{prop:weakFtoWeakPareto}
If $\boldsymbol u\in X$ is (weakly) \(\tilde f\)-nondominated solution of the FOP, then $\boldsymbol u$ is a (weakly) Pareto optimal
solution of \(\text{VMP}_{\alpha}\) for \emph{any} $\alpha\in[0,1]$.
\end{prop}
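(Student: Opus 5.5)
The plan is to argue by contraposition and to treat the weak and the non-weak case in parallel. Fix $\alpha_0\in[0,1]$. Suppose $\boldsymbol u$ is \emph{not} weakly Pareto optimal for $\text{VMP}_{\alpha_0}$. Then by Definition~\ref{wPar} there is $\boldsymbol x\in X$ with
\[
f^{L}(\boldsymbol x,\alpha_0)<f^{L}(\boldsymbol u,\alpha_0)
\quad\text{and}\quad
f^{R}(\boldsymbol x,\alpha_0)<f^{R}(\boldsymbol u,\alpha_0).
\]
In particular both bracketed alternatives in the definition of weak $\tilde f$-nondominance hold at the level $\alpha_0$. Similarly, if $\boldsymbol u$ is not Pareto optimal for $\text{VMP}_{\alpha_0}$, then by Definition~\ref{Par} with $p=2$ there is $\boldsymbol x\in X$ with $f^{L}(\boldsymbol x,\alpha_0)\le f^{L}(\boldsymbol u,\alpha_0)$ and $f^{R}(\boldsymbol x,\alpha_0)\le f^{R}(\boldsymbol u,\alpha_0)$, at least one of them strict. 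This is exactly one of the two alternatives in the definition of $\tilde f$-nondominance, again at the level $\alpha_0$. The desired conclusion is that such an $\boldsymbol x$ contradicts the (weak) $\tilde f$-nondominance of $\boldsymbol u$.

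The key step, and the main obstacle, is the quantifier over $\alpha$. The dominance relations in the two fuzzy definitions are written ``for all $\alpha\in[0,1]$''. Taken literally, a dominating point at the single level $\alpha_0$ does not automatically dominate at every level. So the contrapositive closes only if nondominance is understood levelwise, as in Antczak's setting \cite{Ant}: for each $\alpha$, there is no $\boldsymbol x$ satisfying the bracketed condition at that $\alpha$. Equivalently, the fuzzy order is tested on each $\alpha$-cut interval $\widetilde f_\alpha$ separately, which is how the paper motivates recasting FOP as the family $\text{VMP}_\alpha$.

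I would first make this reading explicit, noting that it is the one under which the FOP is the intersection over $\alpha$ of the problems $\text{VMP}_\alpha$. With it, the point $\boldsymbol x$ found above immediately violates nondominance at $\alpha_0$, and the proof is a direct unfolding of the definitions. If instead one insists on the literal ``for all $\alpha$'' dominance, I would look for a propagation argument transporting dominance from $\alpha_0$ to all levels. Since the endpoint functions are otherwise arbitrary, I expect no such argument exists, and I would check a simple two-level counterexample to confirm that the levelwise interpretation is genuinely needed.

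Finally, since $\alpha_0$ was arbitrary, the conclusion holds for every $\alpha\in[0,1]$. The weak case uses Definition~\ref{wPar}, and the non-weak case uses Definition~\ref{Par}, covering both the ``$<,\le$'' and ``$\le,<$'' alternatives.
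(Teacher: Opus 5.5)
Your diagnosis of the quantifier is correct, and it is fatal to the statement itself. With the definitions as printed, a competitor must beat $\boldsymbol u$ at \emph{every} level $\alpha$. Under that reading the proposition is false in both its weak and non-weak forms, so no unfolding of definitions can prove it. The paper gives no argument of its own, only the citation to \cite{Ant}. You need not merely ``expect'' a counterexample; here is one. Take $X=[0,1]$ (e.g.\ $g_1(x)=-x$, $g_2(x)=x-1$) and let $\widetilde f(x)$ be the triangular fuzzy number $(-x,\,x,\,2-x)$, so that
\[
f^{L}(x,\alpha)=(2\alpha-1)x,\qquad f^{R}(x,\alpha)=2(1-\alpha)+(2\alpha-1)x .
\]
At $\alpha=1$ both endpoints equal $x$, so every $x\in(0,1]$ is strictly worse than $u=0$ in both components. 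Hence no $x$ satisfies either bracketed condition at $\alpha=1$, and $u=0$ is $\tilde f$-nondominated (a fortiori weakly). At $\alpha=0$, however, $x=1$ gives $(f^L,f^R)=(-1,1)$ against $(0,2)$ at $u=0$. So $u=0$ is not even weakly Pareto optimal for $\text{VMP}_{0}$.

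Consequently, your levelwise reading does not clarify the paper's definitions; it replaces them, and it is incompatible with the surrounding results. At a fixed level, the two bracketed alternatives together are exactly Pareto dominance. So under your reading both the weak and the non-weak notion become ``$\boldsymbol u$ is Pareto optimal for $\text{VMP}_{\alpha}$ for every $\alpha$''. The weak/non-weak distinction disappears and the proposition becomes a tautology.

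Worse, Proposition~\ref{prop:WeaklyParetoFamilyToF} (Pareto optimality for \emph{some} $\alpha$ implies weak $\tilde f$-nondominance) is correct under the literal definitions but becomes false under yours. In the example above, $u=0$ is Pareto optimal for $\text{VMP}_{1}$ but not for $\text{VMP}_{0}$. So your argument establishes a different, trivial statement, not the one given.

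Under the paper's definitions the valid implications run the other way: Pareto optimality for a single $\text{VMP}_{\alpha}$ already gives $\tilde f$-nondominance. The direction claimed here needs extra structure that transports dominance between levels. One example is endpoint functions of the form $a_L(\alpha)s(\boldsymbol x)+b_L(\alpha)$ and $a_R(\alpha)s(\boldsymbol x)+b_R(\alpha)$ with $a_L,a_R>0$. There a strict improvement at one level forces $s(\boldsymbol x)<s(\boldsymbol u)$, and hence strict improvement at all levels.
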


\begin{prop}\cite{Ant}\label{prop:ParetoFamilyToF}
If $\boldsymbol u\in X$ is Pareto optimal for \(\text{VMP}_{\alpha}\) for
\emph{each} $\alpha\in [0,1]$, then $\boldsymbol u$ is \(\tilde f\)-nondominated solution of the FOP.
\end{prop}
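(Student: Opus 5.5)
The statement to prove is Proposition~\ref{prop:ParetoFamilyToF}: if $\boldsymbol u$ is Pareto optimal for VMP$_\alpha$ for every $\alpha\in[0,1]$, then $\boldsymbol u$ is $\tilde f$-nondominated. I will argue by contraposition directly from the definitions, with no invexity or subdifferential machinery.

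Suppose $\boldsymbol u$ is not $\tilde f$-nondominated. By definition there is $\boldsymbol x\in X$ such that, for every $\alpha\in[0,1]$, one of two things holds. Either
\[
f^L(\boldsymbol x,\alpha)<f^L(\boldsymbol u,\alpha)\ \text{and}\ f^R(\boldsymbol x,\alpha)\le f^R(\boldsymbol u,\alpha),
\]
or
\[
f^L(\boldsymbol x,\alpha)\le f^L(\boldsymbol u,\alpha)\ \text{and}\ f^R(\boldsymbol x,\alpha)<f^R(\boldsymbol u,\alpha).
\]
Now fix any single $\alpha_0\in[0,1]$; all I need is one value of $\alpha$. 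At $\alpha_0$ the pair $(f^L(\cdot,\alpha_0),f^R(\cdot,\alpha_0))$ evaluated at $\boldsymbol x$ satisfies both components $\le$ those at $\boldsymbol u$, with at least one strict. This is exactly the dominance relation of Definition~\ref{Par} for the biobjective VMP$_{\alpha_0}$ with $p=2$, $f_1=f^L(\cdot,\alpha_0)$ and $f_2=f^R(\cdot,\alpha_0)$. Since $\boldsymbol x\in X$ is feasible, $\boldsymbol u$ is not Pareto optimal for VMP$_{\alpha_0}$. This contradicts the hypothesis, so $\boldsymbol u$ is $\tilde f$-nondominated.

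The only point needing care is the quantifier structure in the definition of $\tilde f$-nondominance. Which of the two alternatives holds may depend on $\alpha$, but that does not matter: at each fixed $\alpha$ either alternative already gives Pareto dominance. The argument in fact shows something stronger, namely that Pareto optimality for VMP$_\alpha$ at even one $\alpha$ suffices.

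If the definition is instead read as requiring one alternative uniformly in $\alpha$, the same argument applies verbatim. No compactness, LLC or invexity assumptions are used, so I expect no real obstacle beyond stating the quantifiers precisely.
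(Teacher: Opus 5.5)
Your argument is correct with respect to the definitions as they are written in this paper. There, $\tilde f$-nondominance excludes any $\boldsymbol x\in X$ for which, \emph{at every} level $\alpha$, one of the two alternatives holds (strict in one endpoint, weak in the other). Evaluating that condition at a single level $\alpha_0$ gives exactly Pareto dominance in VMP$_{\alpha_0}$ over the same feasible set $X$. You also correctly note that which alternative holds may depend on $\alpha$. The paper gives no proof of this proposition; it is quoted from Antczak. So there is no competing route to compare, and the quantifier unwinding you give is the natural proof.

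One caveat concerns your closing claim that Pareto optimality at a single $\alpha$ already suffices. This strengthening is an artifact of the paper's literal definition, which demands a strict improvement at every level. Under that definition the hypothesis ``for each $\alpha$'' is more than is needed, and the next proposition could be upgraded to full $\tilde f$-nondominance.

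The standard ordering of fuzzy numbers in this literature is weaker. There $\tilde f(\boldsymbol x)\prec\tilde f(\boldsymbol u)$ means $f^L(\boldsymbol x,\alpha)\le f^L(\boldsymbol u,\alpha)$ and $f^R(\boldsymbol x,\alpha)\le f^R(\boldsymbol u,\alpha)$ for all $\alpha$, with a strict inequality in some endpoint at only \emph{some} level $\alpha^\ast$. Under that reading your proof still works, but you must choose $\alpha_0=\alpha^\ast$ rather than an arbitrary level. This is exactly where the hypothesis ``for each $\alpha$'' is used.

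The single-level claim then fails. Take $\tilde f(\boldsymbol x)$ to be the triangular number $(a,\varphi(\boldsymbol x),b)$ with constants $a<\varphi(\cdot)<b$. Both endpoints are constant at $\alpha=0$, so every feasible point is Pareto optimal for VMP$_0$. Yet any $\boldsymbol x$ with $\varphi(\boldsymbol x)<\varphi(\boldsymbol u)$ dominates $\boldsymbol u$ in the standard sense.
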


\begin{prop}\cite{Ant}\label{prop:WeaklyParetoFamilyToF}
If $\boldsymbol u\in X$ is Pareto optimal for \(\text{VMP}_{\alpha}\) for
\emph{some} $\alpha\in[0,1]$, then $\boldsymbol u$ is weakly \(\tilde f\)-nondominated solution of the FOP.
\end{prop}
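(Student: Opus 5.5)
We argue by contraposition: assuming $\boldsymbol u$ is Pareto optimal for \(\text{VMP}_{\bar\alpha}\) for some fixed $\bar\alpha\in[0,1]$, we suppose that $\boldsymbol u$ fails to be weakly \(\tilde f\)-nondominated and derive a contradiction. The idea is that the dominance conditions in the definition of weak \(\tilde f\)-nondominance are required for \emph{all} $\alpha\in[0,1]$. Any such dominating point therefore dominates $\boldsymbol u$ at the particular level $\bar\alpha$ as well, which is exactly what Pareto optimality at $\bar\alpha$ forbids.

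Concretely, if $\boldsymbol u$ is not weakly \(\tilde f\)-nondominated, there is $\boldsymbol x\in X$ satisfying one of the two alternatives for every $\alpha\in[0,1]$. We split into the two cases. In the first, $f^{L}(\boldsymbol x,\alpha)<f^{L}(\boldsymbol u,\alpha)$ and $f^{R}(\boldsymbol x,\alpha)\le f^{R}(\boldsymbol u,\alpha)$ for all $\alpha$. In the second, the same holds with the strict and non-strict inequalities interchanged. In either case we specialize to $\alpha=\bar\alpha$ and obtain
\[
f^{L}(\boldsymbol x,\bar\alpha)\le f^{L}(\boldsymbol u,\bar\alpha),\qquad f^{R}(\boldsymbol x,\bar\alpha)\le f^{R}(\boldsymbol u,\bar\alpha),
\]
with strict inequality in at least one component. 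By Definition~\ref{Par}, applied to the biobjective problem \(\text{VMP}_{\bar\alpha}\) with $\boldsymbol F=(f^{L}(\cdot,\bar\alpha),f^{R}(\cdot,\bar\alpha))^T$ over the same feasible set $X$, this says that $\boldsymbol x$ dominates $\boldsymbol u$. That contradicts the Pareto optimality of $\boldsymbol u$ for \(\text{VMP}_{\bar\alpha}\), so $\boldsymbol u$ must be weakly \(\tilde f\)-nondominated.

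The argument is purely order-theoretic and uses no invexity, Lipschitz continuity or subdifferential information. The main point requiring care is the quantifier structure in the definition of weak \(\tilde f\)-nondominance. The weak notion excludes points that satisfy one alternative uniformly in $\alpha$, so restricting to a single level is legitimate. The paper's wording of the two nondominance definitions is nearly identical, and they must be read so that the weak notion is the one whose negation yields a single uniform alternative. If instead the alternatives could switch between levels, the specialization step still works, since at $\bar\alpha$ one of them holds and both give Pareto dominance. So the proof goes through under either reading.
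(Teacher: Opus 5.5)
Your proof is correct. A point $\boldsymbol x$ that violates weak $\tilde f$-nondominance satisfies one of the two alternatives at the level $\bar\alpha$ in particular. Each alternative is exactly Pareto dominance of $\boldsymbol u$ in \(\text{VMP}_{\bar\alpha}\) in the sense of Definition~\ref{Par}, which contradicts the assumption. The paper does not prove this proposition itself (it is quoted from \cite{Ant}), so there is no argument to compare against; your one-level specialization is the natural proof.

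One remark on your last paragraph. The ``switching'' reading you consider is exactly how the paper words the non-weak notion of $\tilde f$-nondominance. So your argument actually shows more: under the paper's literal definitions, Pareto optimality for \(\text{VMP}_{\bar\alpha}\) at a single level already gives full $\tilde f$-nondominance. As written, the distinction between Propositions~\ref{prop:ParetoFamilyToF} and~\ref{prop:WeaklyParetoFamilyToF} therefore collapses. Consider instead the more common convention, in which weak nondominance only excludes points $\boldsymbol x$ with $f^{L}(\boldsymbol x,\alpha)<f^{L}(\boldsymbol u,\alpha)$ and $f^{R}(\boldsymbol x,\alpha)<f^{R}(\boldsymbol u,\alpha)$ for all $\alpha$. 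There your specialization argument works verbatim, and even weak Pareto optimality at one level would suffice.
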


Let $P_{\alpha}({\boldsymbol\lambda}(\alpha))$ denote a scalarized
problem for a fixed $\alpha\in[0,1]$ with weight vector
${\boldsymbol\lambda}(\alpha)=({\lambda_1}(\alpha),{\lambda_2}(\alpha))\in[0,1]^{2}$
$$
  P_{\alpha}({\boldsymbol\lambda}(\alpha))\qquad
  \min_{\boldsymbol x\in X}\;\; {\lambda_1}(\alpha) f^{L}(\boldsymbol x,\alpha)+
                                  {\lambda_2}(\alpha)f^{R}(\boldsymbol x,\alpha).
$$
The following propositions link the solution of scalarized problem $P_{\alpha}({\boldsymbol\lambda}(\alpha))$ with different ${\boldsymbol\lambda}(\alpha)$ with Pareto optimality for \(\text{VMP}_{\alpha}\).

\begin{prop}\cite{Ant}\label{prop:scalarToParetoFixed}
Let $\boldsymbol u\in X$ be a minimizer of the scalarized problem 
$P_{\alpha}({\boldsymbol\lambda}(\alpha))$.
\begin{enumerate}
  \item If ${\boldsymbol\lambda}(\alpha)\in[0,1]^{2}$,
        then $\boldsymbol u$ is weakly Pareto optimal for
        \(\text{VMP}_{\alpha}\).
  \item If ${\boldsymbol\lambda}(\alpha)\in(0,1)^{2}$,
        then $\boldsymbol u$ is Pareto optimal for
        \(\text{VMP}_{\alpha}\).
  \item If ${\boldsymbol\lambda}(\alpha)\in[0,1]^{2}$ and the minimizer
        is unique, then $\boldsymbol u$ is Pareto optimal for
        \(\text{VMP}_{\alpha}\).
\end{enumerate}
\end{prop}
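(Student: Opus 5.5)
Each of the three items is proved by contradiction, using only the definitions of (weak) Pareto optimality for \(\text{VMP}_{\alpha}\) and the minimality of $\boldsymbol u$ in $P_{\alpha}({\boldsymbol\lambda}(\alpha))$. Throughout, write $\lambda_1=\lambda_1(\alpha)$, $\lambda_2=\lambda_2(\alpha)$ and $\Phi(\boldsymbol x)=\lambda_1 f^{L}(\boldsymbol x,\alpha)+\lambda_2 f^{R}(\boldsymbol x,\alpha)$. Minimality means $\Phi(\boldsymbol u)\le\Phi(\boldsymbol x)$ for all $\boldsymbol x\in X$.

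For item 1, we need ${\boldsymbol\lambda}(\alpha)\neq\boldsymbol 0$. With $\boldsymbol\lambda=\boldsymbol 0$ the scalarized problem is trivial and every feasible point is a minimizer, so the claim would be false. I will therefore read $[0,1]^2$ as $[0,1]^2\setminus\{\boldsymbol 0\}$, exactly as in the ($\Leftarrow$) part of Theorem~\ref{sufnecpar_convex}. Suppose $\boldsymbol u$ is not weakly Pareto optimal. Then there is $\boldsymbol x\in X$ with $f^{L}(\boldsymbol x,\alpha)<f^{L}(\boldsymbol u,\alpha)$ and $f^{R}(\boldsymbol x,\alpha)<f^{R}(\boldsymbol u,\alpha)$. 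Multiply these inequalities by $\lambda_1,\lambda_2\ge0$ and add. Since at least one weight is positive, the sum is strict, giving $\Phi(\boldsymbol x)<\Phi(\boldsymbol u)$, a contradiction. No convexity is needed; this is the argument of Theorem~\ref{sufnecpar_convex} specialised to $p=2$.

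For item 2, both weights are strictly positive. Suppose $\boldsymbol x\in X$ dominates $\boldsymbol u$ in the sense of Definition~\ref{Par}: both endpoint values are $\le$ those at $\boldsymbol u$, and at least one is strictly smaller. Multiplying by the positive weights keeps the strict inequality in that component, so summing again gives $\Phi(\boldsymbol x)<\Phi(\boldsymbol u)$, a contradiction.

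For item 3, suppose $\boldsymbol u$ is the unique minimizer but some $\boldsymbol x\in X$ dominates it. Then $\boldsymbol x\neq\boldsymbol u$ and, because the weights are nonnegative, $\Phi(\boldsymbol x)\le\Phi(\boldsymbol u)$. Hence $\boldsymbol x$ is also a minimizer, contradicting uniqueness. This item does not need ${\boldsymbol\lambda}\neq\boldsymbol 0$, since uniqueness already rules out the zero-weight case unless $X$ is a singleton, in which case the claim is trivial.

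The only delicate point is the degenerate weight ${\boldsymbol\lambda}(\alpha)=\boldsymbol 0$ in item 1, which I handle by the explicit exclusion above. Everything else is a one-line summation argument.
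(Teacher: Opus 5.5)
Your proof is correct. The paper gives no proof of this proposition (it is quoted from \cite{Ant}), but your weighted-sum contradiction is the same argument the paper uses in the ($\Leftarrow$) part of Theorem~\ref{sufnecpar_convex} and at the end of the proof of Theorem~\ref{4.3.1}, and it needs no convexity or invexity. Your exclusion of $\boldsymbol\lambda(\alpha)=\boldsymbol 0$ in item~1 is a necessary correction to the statement as printed, since with zero weights every feasible point minimizes $P_{\alpha}(\boldsymbol\lambda(\alpha))$; items~2 and~3 hold as stated, as your arguments show.
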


%\\begin{prop}\label{prop:scalarToParetoGeneral}
%Let $P(\boldsymbol\lambda(\boldsymbol x))$ be a variable‑weight scalarisation of
%a general \(p\)-objective problem with objective functions
%$f_{1},\dots,f_{p}$.  
%Suppose $\boldsymbol u\in D$ minimises $P(\boldsymbol\lambda(\boldsymbol x))$
%and define
%\[
 % I(\boldsymbol u):=
   % \bigl\{i\in\{1,\dots,p\}\,\big|\,\lambda_{i}(\boldsymbol x)>0
   %       \text{ for all }\boldsymbol x\in D\bigr\}.
%\]
%Assume $I(\boldsymbol u)\neq\varnothing$,
%$\boldsymbol\lambda(\boldsymbol x)\in[0,1]^{p}$ and
%\[
%  \sum_{i=1}^{p}\lambda_{i}(\boldsymbol x)\bigl(f_{i}(\boldsymbol x)-f_{i}(\boldsymbol u)\bigr)
%  >0\quad\forall\boldsymbol x\in D.
%\]
%Then $\boldsymbol u$ is Pareto optimal for the associated vector problem.
%Define coefficients $\lambda_i(\boldsymbol x) \in [0,1],\ i=1,\dots p$. Define $\textnormal{I}(\boldsymbol{u}) = \{i = 1,\dots, p \mid \lambda_i(\boldsymbol{x}) > 0, \forall \boldsymbol{x} \in \textnormal{D}$. Here we assume that $\textnormal{I}(\boldsymbol{u}) \neq \emptyset$. If there exist coefficients $\lambda_i(\boldsymbol{x}) \in \textnormal{I}(\boldsymbol{u})$ such that
%$$\sum_{i=1}^p \lambda_i(\boldsymbol{x})((f_i(\boldsymbol{x}) - f_i(\boldsymbol{u})) \geq 0,\ \forall \boldsymbol{x} \in D$$
%and if there is no $\boldsymbol{x} \in D$ such that $f_i(\boldsymbol{x}) = f_i(\boldsymbol{u}),\ \forall i \in I(\boldsymbol{u})$, then $\boldsymbol{u}$ is a Pareto optimal solution of the associated VMP.
%\\end{prop}

\section{KKT optimality conditions}\label{KKT conditions}
The Karush–Kuhn–Tucker (KKT) conditions are first-order necessary conditions for a point to be a local optimum of a constrained optimization problem.
In this section, we prove the Karush-Kuhn-Tucker optimality conditions for various nondominated solutions of the FOP introduced in subsection \ref{subFO}.

\subsection{KKT with invexity}

Antczak \cite{Ant} (Theorems 37, 38) showed that, under the invexity of the objective and
constraint functions, satisfaction of the KKT conditions at a feasible point
implies an (weakly) $\tilde f$ nondominated solution of the FOP.  

For completeness and ease of later comparison, we recall  Antczak's celebrated results.
\begin{theorem}[Antczak, Thm.~37]\label{thm:Ant37}
Let $\boldsymbol u$ be a feasible solution of the nonsmooth FOP, and assume that all functions $f$, $g$, and $h$ are LLC.
Let $J(\boldsymbol u)=\{j\!\!:\,g_j(\boldsymbol u)=0\}$ denote the set of active
inequality indices. For some $\alpha\in[0,1]$, assume that there exist
$\lambda_1(\alpha)>0,\ \lambda_2(\alpha)>0,\ 
{\boldsymbol\mu}(\alpha)\in\mathbb R^m$ with
${\boldsymbol\mu}(\alpha)\ge \boldsymbol 0$, and
${\boldsymbol\vartheta}(\alpha)\in\mathbb R^r$ such that the KKT conditions
\[
\boldsymbol 0 \in
\lambda_1(\alpha)\,\partial_C f^{L}(\boldsymbol u,\alpha)
+ \lambda_2(\alpha)\,\partial_C f^{R}(\boldsymbol u,\alpha)\]
\[+ \sum_{j\in J(\boldsymbol u)} \mu_j(\alpha)\,\partial_C g_j(\boldsymbol u)
+ \sum_{k=1}^r \vartheta_k(\alpha)\,\partial_C h_k(\boldsymbol u),
\]
\[
\mu_j(\alpha)\,g_j(\boldsymbol u)=0,\qquad j\in J,
\]
hold. Suppose that, with respect to the same vector mapping
$\boldsymbol\eta(\cdot;\boldsymbol u)$, all functions 
$f^{L}(\cdot,\alpha),\ f^{R}(\cdot,\alpha),\ g_j(\cdot)$ (for $j\in J(\boldsymbol u)$),
and $h_k(\cdot)$ (for $k=1,\dots,r$) are invex at $\boldsymbol u$ on $X$.
Then $\boldsymbol u$ is a weakly $\tilde f$-nondominated solution of the FOP.
\end{theorem}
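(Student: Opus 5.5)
The plan is to show that, for the fixed $\alpha$ in the hypothesis, $\boldsymbol u$ minimizes the scalarized problem $P_{\alpha}(\boldsymbol\lambda(\alpha))$ over $X$, and then to pass to fuzzy nondominance through the propositions of Section~5. I normalize first: dividing the KKT inclusion by $\lambda_1(\alpha)+\lambda_2(\alpha)>0$ keeps every condition and places $(\lambda_1(\alpha),\lambda_2(\alpha))$ in $(0,1)^2$. If $\boldsymbol u$ solves $P_\alpha(\boldsymbol\lambda(\alpha))$, Proposition~\ref{prop:scalarToParetoFixed}(2) makes $\boldsymbol u$ Pareto optimal for VMP$_\alpha$ at this $\alpha$. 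Proposition~\ref{prop:WeaklyParetoFamilyToF} then gives that $\boldsymbol u$ is weakly $\tilde f$-nondominated, which is the claim.

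\emph{Step 1 (subgradient representation).} From the KKT inclusion I pick $\boldsymbol\xi_L\in\partial_C f^L(\boldsymbol u,\alpha)$, $\boldsymbol\xi_R\in\partial_C f^R(\boldsymbol u,\alpha)$, $\boldsymbol\zeta_j\in\partial_C g_j(\boldsymbol u)$ for $j\in J(\boldsymbol u)$, and $\boldsymbol\chi_k\in\partial_C h_k(\boldsymbol u)$ such that
\[
\lambda_1\boldsymbol\xi_L+\lambda_2\boldsymbol\xi_R+\sum_{j\in J(\boldsymbol u)}\mu_j\boldsymbol\zeta_j+\sum_{k=1}^r\vartheta_k\boldsymbol\chi_k=\boldsymbol 0 .
\]

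\emph{Step 2 (invexity inequalities).} Fix any $\boldsymbol x\in X$ and write $\boldsymbol\eta=\boldsymbol\eta(\boldsymbol x;\boldsymbol u)$. Invexity of $f^L(\cdot,\alpha)$ and $f^R(\cdot,\alpha)$ gives
\[
\lambda_1\bigl(f^L(\boldsymbol x,\alpha)-f^L(\boldsymbol u,\alpha)\bigr)+\lambda_2\bigl(f^R(\boldsymbol x,\alpha)-f^R(\boldsymbol u,\alpha)\bigr)\ \ge\ (\lambda_1\boldsymbol\xi_L+\lambda_2\boldsymbol\xi_R)^T\boldsymbol\eta .
\]
By Step~1 the right-hand side equals $-\sum_{j}\mu_j\boldsymbol\zeta_j^T\boldsymbol\eta-\sum_k\vartheta_k\boldsymbol\chi_k^T\boldsymbol\eta$. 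So it suffices to show that both sums are $\le 0$.

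For an active $j$, invexity of $g_j$ together with $g_j(\boldsymbol u)=0$ and $g_j(\boldsymbol x)\le 0$ gives $\boldsymbol\zeta_j^T\boldsymbol\eta\le g_j(\boldsymbol x)-g_j(\boldsymbol u)\le 0$. Since $\mu_j\ge 0$, each term $\mu_j\boldsymbol\zeta_j^T\boldsymbol\eta$ is $\le 0$. Inactive indices have $\mu_j=0$ by complementary slackness.

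For the equality constraints, invexity of $h_k$ and $h_k(\boldsymbol x)=h_k(\boldsymbol u)=0$ give $\boldsymbol\chi_k^T\boldsymbol\eta\le 0$. Hence $\vartheta_k\boldsymbol\chi_k^T\boldsymbol\eta\le 0$ whenever $\vartheta_k\ge 0$.

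\emph{Main obstacle.} The hard case is $\vartheta_k<0$. Here invexity of $h_k$ alone yields only $\boldsymbol\chi_k^T\boldsymbol\eta\le 0$, which is the wrong sign. I would handle it by orienting each equality constraint, using that $\{h_k=0\}=\{-h_k=0\}$ and $\partial_C(-h_k)(\boldsymbol u)=-\partial_C h_k(\boldsymbol u)$. The KKT relation is then written with nonnegative multipliers $|\vartheta_k|$ on $\pm h_k$, with the sign chosen to match $\vartheta_k$. The stated invexity hypothesis on the equality constraints must then be read for these oriented functions $\operatorname{sign}(\vartheta_k)h_k$.

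This reading is a strengthening, not a free normalization. For $k$ with $\vartheta_k<0$ it requires invexity of $-h_k$, which does not follow from invexity of $h_k$. I expect this to be the genuinely delicate point. If the statement is taken literally with $\vartheta_k$ of arbitrary sign, I do not see how to control these terms, so the argument as planned covers the case $\vartheta\ge\boldsymbol 0$, or the oriented reading above.

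With the equality terms handled, the objective combination satisfies $\lambda_1 f^L(\boldsymbol x,\alpha)+\lambda_2 f^R(\boldsymbol x,\alpha)\ge \lambda_1 f^L(\boldsymbol u,\alpha)+\lambda_2 f^R(\boldsymbol u,\alpha)$ for all $\boldsymbol x\in X$. This is exactly optimality of $\boldsymbol u$ for $P_\alpha(\boldsymbol\lambda(\alpha))$, and the propositions cited above then conclude the proof.
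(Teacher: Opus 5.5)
Your argument is correct once each equality constraint is oriented by the sign of $\vartheta_k$, and it follows the paper's route. The paper only quotes this theorem from Antczak, but its proof of the generalization, Theorem~\ref{4.3.1}, proceeds as you do. It pairs the KKT subgradient identity with $\boldsymbol\eta(\boldsymbol x;\boldsymbol u)$, shows the constraint terms are nonpositive, and derives a positively weighted inequality for $(f^L,f^R)$ at the fixed $\alpha$. From this it concludes Pareto optimality for $\text{VMP}_{\alpha}$ and hence weak $\tilde f$-nondominance. The only difference is that the paper redoes the domination contradiction by hand, where you cite Propositions~\ref{prop:scalarToParetoFixed}(2) and~\ref{prop:WeaklyParetoFamilyToF}.

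Your worry about $\vartheta_k<0$ is justified. With $\boldsymbol\vartheta(\alpha)$ of arbitrary sign and invexity imposed on $h_k$ itself, the statement is false, so your restriction is necessary rather than a weakness of your proof.

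Counterexample: take $n=1$, the crisp objective $\tilde f(x)=x$ (so $f^L(x,\alpha)=f^R(x,\alpha)=x$), no inequality constraints, and $h(x)=x^2-1$. Then $X=\{-1,1\}$; take $u=1$.
\begin{itemize}
\item The KKT condition $0=\lambda_1+\lambda_2+2\vartheta$ holds with $\lambda_1=\lambda_2=1$ and $\vartheta=-1$.
\item Set $\eta(x;1):=\min\{x-1,(x^2-1)/2\}$. Then $x-1\ge\eta(x;1)$ and $x^2-1\ge 2\eta(x;1)$ for every $x\in\mathbb R$, so $f^L(\cdot,\alpha)$, $f^R(\cdot,\alpha)$ and $h$ are invex at $u$ with the same $\eta$.
\item Yet $x=-1\in X$ gives $f^L(-1,\alpha)=f^R(-1,\alpha)=-1<1$ for all $\alpha$, so $u$ is not even weakly $\tilde f$-nondominated.
\end{itemize}
Under your oriented reading, $-h$ would have to be invex at $u$ with the same $\eta$ as $f$. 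That fails: at $x=-1$ one would need both $\eta(-1;1)\ge 0$ and $\eta(-1;1)\le -2$.

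The correct hypothesis is invexity of $h_k$ when $\vartheta_k>0$ and of $-h_k$ when $\vartheta_k<0$, i.e.\ of $\vartheta_k h_k$. The paper uses exactly this in Theorem~\ref{4.3.1}, where the assumption is placed on $(\vartheta_1h_1,\dots,\vartheta_rh_r)$ so that the sign is absorbed into the functions. Only under that reading is its remark that ``similar nonpositivity'' holds for the $h_k$ valid.
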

\begin{theorem}[Antczak, Thm.~38]\label{thm:Ant38}
Let $\boldsymbol u$ be a feasible solution of the nonsmooth FOP,
and assume that all functions $f$, $g$, and $h$ are LLC.
Let $J(\boldsymbol u)=\{j\!\!:\,g_j(\boldsymbol u)=0\}$. Assume there exist
$\lambda_1(\alpha)>0,\ \lambda_2(\alpha)>0,\ 
{\boldsymbol\mu}(\alpha)\in\mathbb R^{m}$ with
${\boldsymbol\mu}(\alpha)\ge \boldsymbol 0$, and
${\boldsymbol\vartheta}(\alpha)\in\mathbb R^r$ such that KKT optimality conditions
\[
\boldsymbol 0 \in
\lambda_1(\alpha)\,\partial_C f^{L}(\boldsymbol u,\alpha)
+ \lambda_2(\alpha)\,\partial_C f^{R}(\boldsymbol u,\alpha)\]
\[+ \sum_{j\in J(\boldsymbol u)} \mu_j(\alpha)\,\partial_C g_j(\boldsymbol u)
+ \sum_{k=1}^r \vartheta_k(\alpha)\,\partial_C h_k(\boldsymbol u),
\]
\[
\mu_j(\alpha)\,g_j(\boldsymbol u)=0,\qquad j\in J,
\]
hold for each  $\alpha\in[0,1]$.
Suppose that, with respect to the same vector mapping
$\boldsymbol\eta(\cdot;\boldsymbol u)$, all functions 
$f^{L}(\cdot,\alpha),\ f^{R}(\cdot,\alpha),\ g_j(\cdot)$ (for $j\in J(\boldsymbol u)$,
and $h_k(\cdot)$ (for $k=1,\dots,r$) are invex at $\boldsymbol u$ on $X$.
Then $\boldsymbol u$ is an $\tilde f$-nondominated solution of the FOP.
\end{theorem}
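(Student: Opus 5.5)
The plan is to fix an arbitrary $\alpha\in[0,1]$ and show that $\boldsymbol u$ is Pareto optimal for \(\text{VMP}_{\alpha}\). Once this holds for each $\alpha$, Proposition~\ref{prop:ParetoFamilyToF} gives that $\boldsymbol u$ is $\tilde f$-nondominated. For the fixed $\alpha$ I would show that $\boldsymbol u$ minimizes the scalarized problem $P_{\alpha}(\boldsymbol\lambda(\alpha))$ with strictly positive weights, and then invoke Proposition~\ref{prop:scalarToParetoFixed}(2).

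The weight range is easy to fix. The KKT inclusion is positively homogeneous in the multipliers. So dividing $\lambda_1(\alpha),\lambda_2(\alpha),\boldsymbol\mu(\alpha),\boldsymbol\vartheta(\alpha)$ by, say, $2(\lambda_1(\alpha)+\lambda_2(\alpha))$ puts $\boldsymbol\lambda(\alpha)\in(0,1)^2$ without changing the KKT conditions.

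\textbf{Step 1 (unpacking KKT).} Choose subgradients $\boldsymbol\xi_L\in\partial_C f^L(\boldsymbol u,\alpha)$, $\boldsymbol\xi_R\in\partial_C f^R(\boldsymbol u,\alpha)$, $\boldsymbol\zeta_j\in\partial_C g_j(\boldsymbol u)$ for $j\in J(\boldsymbol u)$, and $\boldsymbol\omega_k\in\partial_C h_k(\boldsymbol u)$ such that
\[
\lambda_1(\alpha)\boldsymbol\xi_L+\lambda_2(\alpha)\boldsymbol\xi_R+\sum_{j\in J(\boldsymbol u)}\mu_j(\alpha)\boldsymbol\zeta_j+\sum_{k=1}^r\vartheta_k(\alpha)\boldsymbol\omega_k=\boldsymbol 0 .
\]

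\textbf{Step 2 (invexity inequalities).} Take any $\boldsymbol x\in X$ and write $\boldsymbol\eta=\boldsymbol\eta(\boldsymbol x;\boldsymbol u)$.
\begin{itemize}
\item Invexity of $f^L(\cdot,\alpha)$ and $f^R(\cdot,\alpha)$ gives $f^L(\boldsymbol x,\alpha)-f^L(\boldsymbol u,\alpha)\ge\boldsymbol\xi_L^T\boldsymbol\eta$, and the analogous inequality for $f^R$.
\item For an active constraint $j\in J(\boldsymbol u)$ we have $0\ge g_j(\boldsymbol x)=g_j(\boldsymbol x)-g_j(\boldsymbol u)\ge\boldsymbol\zeta_j^T\boldsymbol\eta$. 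Since $\mu_j(\alpha)\ge0$, this yields $\mu_j(\alpha)\boldsymbol\zeta_j^T\boldsymbol\eta\le 0$.
\item For the equality constraints, feasibility gives $0=h_k(\boldsymbol x)-h_k(\boldsymbol u)\ge\boldsymbol\omega_k^T\boldsymbol\eta$.
\end{itemize}

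\textbf{Step 3 (combining).} Multiply the objective inequalities by $\lambda_i(\alpha)>0$, add them, and substitute the zero-sum relation from Step 1:
\[
\lambda_1(\alpha)\bigl(f^L(\boldsymbol x,\alpha)-f^L(\boldsymbol u,\alpha)\bigr)+\lambda_2(\alpha)\bigl(f^R(\boldsymbol x,\alpha)-f^R(\boldsymbol u,\alpha)\bigr)\ge-\sum_{j}\mu_j(\alpha)\boldsymbol\zeta_j^T\boldsymbol\eta-\sum_k\vartheta_k(\alpha)\boldsymbol\omega_k^T\boldsymbol\eta .
\]
Once the right-hand side is shown to be $\ge 0$, $\boldsymbol u$ minimizes $P_\alpha(\boldsymbol\lambda(\alpha))$ over $X$. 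The conclusion then follows from Proposition~\ref{prop:scalarToParetoFixed}(2) and Proposition~\ref{prop:ParetoFamilyToF}.

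\textbf{Main obstacle.} The difficulty is the sign of $\vartheta_k(\alpha)$, which is unrestricted. Invexity of $h_k$ only gives $\boldsymbol\omega_k^T\boldsymbol\eta\le0$, which is what we need when $\vartheta_k(\alpha)\ge0$. When $\vartheta_k(\alpha)<0$ we would instead need $\boldsymbol\omega_k^T\boldsymbol\eta\ge0$. I would handle this as Antczak does. The hypothesis ``$h_k$ is invex'' is read as invexity of $\vartheta_k(\alpha)h_k$, that is, $h_k$ is invex when $\vartheta_k(\alpha)>0$ and $-h_k$ is invex when $\vartheta_k(\alpha)<0$. Note that $\partial_C(-h_k)(\boldsymbol u)=-\partial_C h_k(\boldsymbol u)$, so $-\boldsymbol\omega_k\in\partial_C(-h_k)(\boldsymbol u)$. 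Then, using $h_k(\boldsymbol x)=h_k(\boldsymbol u)=0$, each term satisfies $\vartheta_k(\alpha)\boldsymbol\omega_k^T\boldsymbol\eta\le0$, which closes the argument.

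A more economical alternative is to replace each equality $h_k=0$ by the two inequalities $h_k\le0$ and $-h_k\le0$. This folds the equality constraints into the $g$-part with nonnegative multipliers, but it requires invexity of both $h_k$ and $-h_k$. I expect this interpretive step to be the only delicate point; the rest is routine.
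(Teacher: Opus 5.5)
Your argument is correct and follows essentially the route the paper takes for its generalizations in Theorems~\ref{4.3.1} and~\ref{4.3.3}; the paper itself only quotes Theorem~\ref{thm:Ant38} from Antczak without proof, but the route is the same: pair the KKT subgradients with $\boldsymbol\eta(\boldsymbol x;\boldsymbol u)$, make the constraint terms nonpositive, deduce the positively weighted inequality for $(f^L,f^R)$, and conclude Pareto optimality for VMP$_\alpha$ at every $\alpha$, while your reading of the $h_k$-hypothesis as a condition on $\vartheta_k(\alpha)h_k$ matches how Theorem~\ref{4.3.1} places the multipliers inside the constraint functions. That reading is genuinely necessary, not just convenient: take $f^L=f^R=x$, $h(x)=x^2-1$, $u=1$, $\lambda_1=\lambda_2=\tfrac12$, $\vartheta=-\tfrac12$ and $\eta(x;1)=\min\{x-1,\tfrac12(x^2-1)\}$; then every hypothesis as literally stated holds, yet $x=-1$ dominates $u$.
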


The following example illustrates how Theorem \ref{thm:Ant38}  works.
\begin{example}{\cite{Ant}}\label{ex:FOP1}
Consider nonsmooth and nonconvex FOP
\[
  \begin{aligned}
    \text{Minimize}\quad &
       \widetilde f(x)
       =\widetilde 2\,\ln\!\bigl((x^{2}+|x|+1)e\bigr)\ominus_{H}\widetilde 1\\
    \text{subject to}\quad &
       g(x):=x^{2}-5x\le 0,\ x\in \mathbb R,
  \end{aligned}
\]
where $\widetilde 1=(0,1,2)$ is a triangular fuzzy number and
$\widetilde 2=(0,1,2,3,4)$ is a trapezoidal fuzzy number with
$\widetilde 2(1)=\widetilde 2(3)=0.75$ and $\widetilde 2(2)=1$.  
The subtraction $\ominus_H$ is understood in the Hukuhara sense \cite{Hukuhara}, that is, the fuzzy number is obtained by subtracting $1$ from every $\alpha$-cut of the fuzzy number. The feasible set is $X=[0,5]$ and so $0$ is feasible.

The $\alpha$‑cuts of $\widetilde 1$ and $\widetilde 2$ are
\[
  \widetilde 1_{\alpha}=[\alpha,2-\alpha],\qquad
  \widetilde 2_{\alpha}=
  \begin{cases}
    \bigl[\frac{\alpha}{0.75},\frac{3-\alpha}{0.75}\bigr], & 0\le\alpha\le 0.75,\\[4pt]
    \bigl[\frac{\alpha-0.5}{0.25},\frac{1.5-\alpha}{0.25}\bigr], & 0.75\le\alpha\le 1.
  \end{cases}
\]

Set
\[
  \lambda_1(\alpha)=\tfrac14,\qquad \lambda_2(\alpha)=\tfrac34,\qquad
  u=0.
\]

\[
\underbrace{\partial_C\!\left(\tfrac14 f^{L}(\cdot,\alpha)
+\tfrac34 f^{R}(\cdot,\alpha)\right)(0)}_{=\,c(\alpha)[-1,1]}-
-\mu\,\underbrace{\partial_C g(0)}_{=\{-5\}}
\]\ 
\[\]
\[
= c(\alpha)[-1,1] + 5\mu
= [-c(\alpha)+5\mu,\,c(\alpha)+5\mu] \ni 0,
\]
where
\[
f^{L}(x,\alpha)=a_L(\alpha)s(x)-(2-\alpha), \]
\[f^{R}(x,\alpha)=a_R(\alpha)s(x)-\alpha,\]
\[s(x)=\ln\!\bigl((x^{2}+|x|+1)e\bigr),
\]
so that \(\partial_C s(0)=[-1,1]\) and hence
\[\partial_C f^{L}(0,\alpha)=a_L(\alpha)[-1,1],\; \;
\partial_C f^{R}(0,\alpha)=a_R(\alpha)[-1,1],\]
giving \[c(\alpha)=\tfrac14 a_L(\alpha)+\tfrac34 a_R(\alpha).\]
From the given \(\alpha\)-cuts of \(\widetilde 2\),
\[
c(\alpha)=
\begin{cases}
3-\tfrac{2}{3}\alpha, & 0\le\alpha\le 0.75,\\[4pt]
4-2\alpha, & 0.75\le\alpha\le 1,
\end{cases}
\qquad\text{so } c(\alpha)\ge 2=5\mu
\]
and therefore \(0\in[-c(\alpha)+5\mu,\,c(\alpha)+5\mu]\)
for all \(\alpha\in[0,1]\).

Thus, we showed by explicit computation that the KKT conditions of Theorem \ref{thm:Ant38}
$$
  0\in \partial_C\bigl(\tfrac14 f^{L}(\cdot,\alpha)
                    +\tfrac34 f^{R}(\cdot,\alpha)\bigr)(u)
     -\mu\,\partial_Cg(u),
$$
$$\mu\,\underbrace{g(0)}_{0}=0$$
are satisfied at $u=0$ with multiplier $\mu=\tfrac25$ for all
$\alpha\in[0,1]$.  

Both objective and constraint functions are LLC and invex on $X$
with respect to
\(
  \eta(x;u):=\ln(x^{2}+|x|+1)-\ln(u^{2}+|u|+1).
\)
Hence, by Theorem \ref{thm:Ant38}, $u=0$ is an $\tilde f$-nondominated solution of the FOP. This ends the example.
\end{example}

Mishra et al.~\cite{MisGio} relaxed invexity assumption to \(V\)-pseudoinvexity and
\(V\)-quasiinvexity, having obtained proper efficiency of the solution for the corresponding VMP. In the next section, we merge these results to derive stronger and more general
statements.

\subsection{KKT with V-pseudoinvexity and V-quasiinvexity}
Now we formulate and prove a generalized version of Antczak's theorem concerning KKT conditions for the nonsmooth FOP involving LLC functions (cf. Theorem \ref{thm:Ant37}, see also Theorem 6.6.4 in \cite{MisGio}) for some fixed $ \alpha \in [0,1]$.

\begin{theorem}\label{4.3.1} Let $\boldsymbol u$ be a feasible solution of the considered nonsmooth FOP (all functions $f$, $g$, and $h$ are LLC).
Let $J(\boldsymbol{u}) = \{j\!:\,g_j(\boldsymbol{u}) = 0\}$. For some $\alpha\in[0,1]$, assume that there exist $\lambda_1( \alpha) > 0,\  \lambda_2( \alpha) > 0,\  {\boldsymbol \mu}( \alpha) \in \mathbb  R^{m}$ with
${\boldsymbol\mu}(\alpha)\ge \boldsymbol 0$, and $ {\boldsymbol \vartheta}( \alpha) \in \mathbb R^r$ such that KKT optimality conditions
$$\boldsymbol 0 \in  \lambda_1( \alpha)\partial_Cf^L(\boldsymbol u,  \alpha) +  \lambda_2( \alpha)\partial_Cf^R(\boldsymbol u,  \alpha)$$
$$+ \sum_{j\in J(\boldsymbol{u})}  \mu_j( \alpha)\partial_Cg_j(\boldsymbol u) + \sum_{k=1}^r  \vartheta_k( \alpha)\partial_Ch_k(\boldsymbol u),$$
$$ \mu_j( \alpha)g_j(\boldsymbol u) = 0,\ j\in J$$
hold. If $$( \lambda_1( \alpha) f^L(\boldsymbol x,  \alpha),  \lambda_2( \alpha) f^R(\boldsymbol x,  \alpha))$$ is V-pseudoinvex at $\boldsymbol u$ on $X$ with respect to $\boldsymbol \eta$, 
%$$( \mu_1( \alpha) g_1(\boldsymbol x),\dots, \mu_m( \alpha) g_m(\boldsymbol x))$$ 
$$( \mu_j( \alpha) g_j(\boldsymbol x))_{j \in J(\boldsymbol{u})}$$
and $$( \vartheta_1( \alpha) h_1(\boldsymbol x),\dots,  \vartheta_r( \alpha) h_r(\boldsymbol x))$$ are V-quasiinvex at $\boldsymbol u$ on $X$ with respect to the same $\boldsymbol \eta$, then $\boldsymbol u$ is a weakly $\tilde f$-nondominated solution of the FOP.
\end{theorem}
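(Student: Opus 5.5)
Our plan is a proof by contradiction that reduces weak $\tilde f$-nondominance to weak Pareto optimality of VMP$_\alpha$ at the fixed $\alpha$. Suppose $\boldsymbol u$ is not weakly $\tilde f$-nondominated. Then there is a feasible $\boldsymbol x\in X$ such that, for all $\alpha\in[0,1]$ and in particular for the fixed $\alpha$ of the statement, one of two things holds. Either $f^L(\boldsymbol x,\alpha)<f^L(\boldsymbol u,\alpha)$ and $f^R(\boldsymbol x,\alpha)\le f^R(\boldsymbol u,\alpha)$, or the symmetric pair holds. In both cases one inequality is strict and the other is weak. Since $\lambda_1(\alpha),\lambda_2(\alpha)>0$ and the scaling functions $\beta_1,\beta_2$ of the V-pseudoinvexity definition are positive, we get
\[
\beta_1(\boldsymbol x;\boldsymbol u)\lambda_1(\alpha)\bigl[f^L(\boldsymbol x,\alpha)-f^L(\boldsymbol u,\alpha)\bigr]+\beta_2(\boldsymbol x;\boldsymbol u)\lambda_2(\alpha)\bigl[f^R(\boldsymbol x,\alpha)-f^R(\boldsymbol u,\alpha)\bigr]<0 .
\]
By the contrapositive of Definition~\ref{q}, applied to $(\lambda_1 f^L,\lambda_2 f^R)$, this gives $(\boldsymbol\zeta_L+\boldsymbol\zeta_R)^T\boldsymbol\eta(\boldsymbol x;\boldsymbol u)<0$ for the subgradients $\boldsymbol\zeta_L\in\partial_C(\lambda_1 f^L)(\boldsymbol u,\alpha)=\lambda_1\partial_C f^L(\boldsymbol u,\alpha)$ and $\boldsymbol\zeta_R\in\lambda_2\partial_C f^R(\boldsymbol u,\alpha)$ furnished by the definition.

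Next we handle the constraints. For $j\in J(\boldsymbol u)$ we have $\mu_j(\alpha)g_j(\boldsymbol x)\le 0=\mu_j(\alpha)g_j(\boldsymbol u)$, because $\boldsymbol x$ is feasible and $\mu_j\ge 0$. For each $k$ we have $\vartheta_k h_k(\boldsymbol x)=0=\vartheta_k h_k(\boldsymbol u)$. So for any positive scalings $\beta_j$, the weighted sums $\sum_{j\in J(\boldsymbol u)}\beta_j[\mu_j g_j(\boldsymbol x)-\mu_j g_j(\boldsymbol u)]$ and $\sum_k\beta_k[\vartheta_k h_k(\boldsymbol x)-\vartheta_k h_k(\boldsymbol u)]$ are both $\le 0$. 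V-quasiinvexity (Definition~\ref{qq}) then gives
\[
\sum_{j\in J(\boldsymbol u)}\boldsymbol\zeta_j^T\boldsymbol\eta(\boldsymbol x;\boldsymbol u)\le 0,\qquad \sum_{k}\boldsymbol\zeta_k^T\boldsymbol\eta(\boldsymbol x;\boldsymbol u)\le 0
\]
for all $\boldsymbol\zeta_j\in\partial_C(\mu_j g_j)(\boldsymbol u)$ and $\boldsymbol\zeta_k\in\partial_C(\vartheta_k h_k)(\boldsymbol u)$. Here we use $\partial_C(c\,g)=c\,\partial_C g$ for any real $c$, including negative $\vartheta_k$. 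Adding these bounds to the strict inequality from the objective gives
\[
\Bigl(\boldsymbol\zeta_L+\boldsymbol\zeta_R+\sum_j\boldsymbol\zeta_j+\sum_k\boldsymbol\zeta_k\Bigr)^T\boldsymbol\eta(\boldsymbol x;\boldsymbol u)<0 .
\]

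The main obstacle is the subgradient selection. The KKT inclusion provides one specific tuple $\bar{\boldsymbol\xi}_L,\bar{\boldsymbol\xi}_R,\bar{\boldsymbol\xi}_j,\bar{\boldsymbol\xi}_k$ whose weighted sum is $\boldsymbol 0$. Definition~\ref{q}, however, only guarantees that the implication holds for \emph{some} subgradients, not necessarily the KKT ones. The constraint side is harmless, since V-quasiinvexity holds for all subgradients, so we may take exactly the KKT ones there. On the objective side, we would first try to read V-pseudoinvexity at $\boldsymbol u$ in the sense used by Mishra et al., where the implication holds with the subgradients appearing in the KKT condition. Equivalently, we could use the Clarke-derivative form $\sum_i(\lambda_i f_i)^\circ(\boldsymbol u;\boldsymbol\eta)\ge 0\Rightarrow\cdots$, whose contrapositive yields a strict inequality for every selection, in particular $\lambda_1\bar{\boldsymbol\xi}_L^T\boldsymbol\eta+\lambda_2\bar{\boldsymbol\xi}_R^T\boldsymbol\eta<0$. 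We would state explicitly that this is the interpretation adopted.

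With the KKT subgradients in place, the summed vector is exactly the zero vector given by the KKT inclusion. The strict inequality then reads $0<0$, a contradiction. Hence no such $\boldsymbol x$ exists, and $\boldsymbol u$ is a weakly $\tilde f$-nondominated solution. The complementary slackness condition $\mu_j g_j(\boldsymbol u)=0$ is used only to restrict the constraint sum to $J(\boldsymbol u)$, consistent with the form of the KKT inclusion.
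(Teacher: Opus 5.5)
Your proposal is correct under the reading of V-pseudoinvexity you state, and it is the paper's argument run in contrapositive order. The paper first combines the KKT identity with V-quasiinvexity of the constraint families to get $\lambda_1(\alpha)\boldsymbol\xi_L^T\boldsymbol\eta(\boldsymbol x;\boldsymbol u)+\lambda_2(\alpha)\boldsymbol\xi_R^T\boldsymbol\eta(\boldsymbol x;\boldsymbol u)\ge 0$ for every feasible $\boldsymbol x$, then applies V-pseudoinvexity and contradicts a point dominating $\boldsymbol u$ at level $\alpha$, finishing via Proposition~\ref{prop:WeaklyParetoFamilyToF} (which you bypass by arguing directly from the definition of weak $\tilde f$-nondominance).

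Your explicit all-subgradient (or $f^{\circ}$) reading of Definition~\ref{q} is exactly what the paper uses silently when it feeds the KKT subgradients into V-pseudoinvexity, and it is genuinely necessary. Under the literal existential quantifier, take the crisp objective $f^L=f^R=-|x|$ at $u=0$ with the single inactive constraint $x^2-1\le 0$, and set $\eta(x;0)=x$, $\beta_i\equiv 1$. For $x\neq 0$, choose the subgradients $-\lambda_i(\alpha)\operatorname{sign}(x)$, which make the premise false. Then all hypotheses hold, yet every feasible $x\neq 0$ dominates $u$.
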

\begin{proof} By assumption, $\boldsymbol u$ is a feasible solution of the nonsmooth FOP such that for some $ \alpha \in [0,1]$ there exist $\lambda_1( \alpha) > 0$, $\lambda_2( \alpha) > 0$, $\boldsymbol { \mu}( \alpha) \in \mathbb R^{m},\ \boldsymbol{ \mu}( \alpha) \geq \boldsymbol 0$, $\boldsymbol { \vartheta}( \alpha) \in \mathbb R^r$ such that the KKT optimality conditions hold. By assumption, all functions involved in the FOP are LLC. Therefore, all functions constituting the associated scalarized problem $\boldsymbol {P_{\alpha}({\boldsymbol \lambda)}}$ are also LLC. By the KKT conditions, we know that there exist $\boldsymbol \xi_L \in \partial_C f^L(\boldsymbol u,  \alpha),\ \boldsymbol \xi_R \in \partial_C f^R(\boldsymbol u,  \alpha)$, $\{\boldsymbol \varsigma_j | \boldsymbol \varsigma_j \in \partial_C g_j(\boldsymbol u),\ j \in J(\boldsymbol{u})\}$ and $\{\boldsymbol \zeta_k | \boldsymbol \zeta_k \in \partial_C h_k(\boldsymbol u),\ k=1,\dots,r\}$ such that
$$ \lambda_1( \alpha) \boldsymbol \xi_L +  \lambda_2( \alpha) \boldsymbol \xi_R + \sum_{j \in J(\boldsymbol{u})} \mu_j( \alpha) \boldsymbol \varsigma_j + \sum_{k=1}^r \vartheta_k( \alpha) \boldsymbol \zeta_k = \boldsymbol 0.$$
Therefore, for all feasible $\boldsymbol x\in X$, we have 
$$ \lambda_1( \alpha) \boldsymbol \xi_{L}^T \boldsymbol \eta(\boldsymbol x; \boldsymbol u) +  \lambda_2( \alpha) \boldsymbol \xi_{R}^T \boldsymbol \eta(\boldsymbol x; \boldsymbol u) +$$ $$\sum_{j \in J(\boldsymbol{u})} \mu_j( \alpha) \boldsymbol \varsigma_j^T \boldsymbol \eta(\boldsymbol x; \boldsymbol u)+\sum_{k=1}^r \vartheta_k( \alpha) \boldsymbol \zeta^T_k \boldsymbol \eta(\boldsymbol x; \boldsymbol u) = 0.$$
Since $ \mu_j( \alpha) g_j(\boldsymbol u) = 0$ and $ \mu_j( \alpha) g_j(\boldsymbol x) \leq 0$ for any feasible $\boldsymbol x$ and $j\in J$, we have $$ \mu_j( \alpha) g_j(\boldsymbol x) \leq  \mu_j( \alpha) g_j(\boldsymbol u).$$
Due to the definition of $V$-quasiinvexity, there exist $\beta_j : X \times X \rightarrow \mathbb R_{> \boldsymbol 0},\ j=1,\dots,m$. Since $\beta_j(\boldsymbol x; \boldsymbol u) > 0$, we have 
$$\sum_{j \in J(\boldsymbol{u})}  \mu_j( \alpha) \beta_j(\boldsymbol x; \boldsymbol u) g_j(\boldsymbol x) \leq \sum_{j \in J(\boldsymbol{u})}  \mu_j( \alpha) \beta_j(\boldsymbol x; \boldsymbol u) g_j(\boldsymbol u).$$
Then, by the V-quasiinvexity condition, we get
$$\sum_{j \in J(\boldsymbol{u})}  \mu_j( \alpha) \boldsymbol \varsigma_j^T \boldsymbol \eta(\boldsymbol x; \boldsymbol u) \leq 0,\ \forall\ \boldsymbol \varsigma_j \in \partial_C g_j(\boldsymbol u).$$
Since $ \vartheta_k( \alpha) h_k(\boldsymbol x) =  \vartheta_k( \alpha) h_k(\boldsymbol u) = 0$ for any feasible $\boldsymbol x$, we can show that similar nonpositivity constraints are true for the equality constraints functions $h_k$. Thus, we have
$$ \lambda_1( \alpha) \boldsymbol \xi_{L}^T \boldsymbol \eta(\boldsymbol x; \boldsymbol u) +  \lambda_2( \alpha) \boldsymbol \xi_{R}^T \boldsymbol \eta(\boldsymbol x; \boldsymbol u) \geq 0.$$
Then, again, by the V-pseudoinvexity condition, there exists a vector $\boldsymbol \beta\in \mathbb R^2_{> \boldsymbol 0}$ with components $\beta_i: X \times X \rightarrow \mathbb R_{> \boldsymbol 0}$, $i=1,2$, such that for all feasible $\boldsymbol x\in X$,  we have

$$\beta_1(\boldsymbol x, \boldsymbol u)  \lambda_1( \alpha) f^L(\boldsymbol x,  \alpha) + \beta_2(\boldsymbol x, \boldsymbol u)  \lambda_2( \alpha) f^R(\boldsymbol x,  \alpha)$$
$$\geq \beta_1(\boldsymbol x, \boldsymbol u)  \lambda_1( \alpha) f^L(\boldsymbol u,  \alpha) + \beta_2(\boldsymbol x, \boldsymbol u)  \lambda_2( \alpha) f^R(\boldsymbol u,  \alpha).$$
We assume that $\boldsymbol u$ is not a Pareto optimal point for $(f^L,f^R)$. Then there exists $\boldsymbol x^*\in X$ such that $$f^L(\boldsymbol x^*) \leq f^L(\boldsymbol u)\text{ and } f^R(\boldsymbol x^*) \leq f^R(\boldsymbol u),$$ as well as $$f^L(\boldsymbol x^*) < f^L(\boldsymbol u)\text{ or } f^R(\boldsymbol x^*) < f^R(\boldsymbol u).$$ Then we have
$$\beta_1(\boldsymbol x, \boldsymbol u)  \lambda_1( \alpha) f^L(\boldsymbol x^*,  \alpha) + \beta_2(\boldsymbol x, \boldsymbol u)  \lambda_2( \alpha) f^R(\boldsymbol x^*,  \alpha)$$
$$< \beta_1(\boldsymbol x, \boldsymbol u)  \lambda_1( \alpha) f^L(\boldsymbol u,  \alpha) + \beta_2(\boldsymbol x, \boldsymbol u)  \lambda_2( \alpha) f^R(\boldsymbol u,  \alpha).$$
This is a contradiction. Hence, $\boldsymbol u$ is a Pareto optimal solution for VMP$_{\alpha}$. Since $\alpha$ is fixed, $\boldsymbol u$ is a weakly $\tilde f$-nondominated solution of the FOP. 
\end{proof}

Antczak's article \cite{Ant} also showed that if the functions associated with the FOP are invex and the KKT conditions hold for a feasible solution of the FOP for every $\alpha \in [0,1]$, then the solution is a Pareto optimal solution of the FOP. Next, we apply Mishra's result, similar to the previous theorem, with the only difference being that the theorem assumptions are now true for every $\alpha \in [0,1]$. As a result, $\tilde f$-nondominance can be guaranteed.

\begin{theorem}\label{4.3.3} Let $\boldsymbol u$ be a feasible solution of the FOP. In addition, assume that, for every $\alpha \in [0,1]$, there exist $ \lambda_1(\alpha) > 0,\  \lambda_2(\alpha) > 0,\  {\boldsymbol \mu}(\alpha) \in \mathbb R^{m}$ with ${\boldsymbol \mu}(\alpha) \geq {\boldsymbol 0}$, and $ {\boldsymbol \vartheta}( \alpha) \in \mathbb R^r$ such that the KKT optimality conditions
$$\boldsymbol 0 \in  \lambda_1(\alpha)\partial_C f^L(\boldsymbol u, \alpha) +  \lambda_2(\alpha)\partial_C f^R(\boldsymbol u, \alpha)$$
$$+\sum_{j \in J(\boldsymbol{u})}  \mu_j(\alpha)\partial_C g_j(\boldsymbol u) + \sum_{k=1}^r  \vartheta_k( \alpha)\partial_C h_k(\boldsymbol u),$$
$$ \mu_j(\alpha)g_j(\boldsymbol u) = 0,\ j=1,\dots,m$$
hold. If $$( \lambda_1(\alpha) f^L(\boldsymbol x, \alpha),  \lambda_2(\alpha) f^R(\boldsymbol x, \alpha))$$ is V-pseudoinvex at $\boldsymbol u$ on $X$ with respect to $\boldsymbol \eta$, $$( \mu_j( \alpha) g_j(\boldsymbol x))_{j \in J(\boldsymbol{u})}$$ and $$( \vartheta_1(\alpha) h_1(\boldsymbol x),\dots,  \vartheta_r(\alpha) h_r(\boldsymbol x))$$ are V-quasiinvex at $\boldsymbol u$ on $X$  with respect to the same $\boldsymbol \eta$, then $\boldsymbol u$ is $\tilde f$-nondominated solution of the FOP.
\end{theorem}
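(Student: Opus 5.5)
The plan is to reduce Theorem \ref{4.3.3} to Theorem \ref{4.3.1} applied separately at each $\alpha\in[0,1]$, and then to pass from the family of biobjective problems VMP$_\alpha$ to the fuzzy problem through Proposition \ref{prop:ParetoFamilyToF}. The steps are therefore: (i) fix an arbitrary $\alpha$; (ii) show that $\boldsymbol u$ is Pareto optimal (not merely weakly Pareto optimal) for VMP$_\alpha$; (iii) quantify over all $\alpha$ and invoke Proposition \ref{prop:ParetoFamilyToF}.

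For step (ii), fix $\alpha\in[0,1]$ and take the KKT multipliers $\lambda_1(\alpha),\lambda_2(\alpha)>0$, $\boldsymbol\mu(\alpha)\ge\boldsymbol 0$ and $\boldsymbol\vartheta(\alpha)$. Choose subgradients $\boldsymbol\xi_L,\boldsymbol\xi_R,\boldsymbol\varsigma_j,\boldsymbol\zeta_k$ whose weighted sum vanishes.

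For every feasible $\boldsymbol x$ we have $\mu_j(\alpha)g_j(\boldsymbol x)\le 0=\mu_j(\alpha)g_j(\boldsymbol u)$ for $j\in J(\boldsymbol u)$, and $\vartheta_k(\alpha)h_k(\boldsymbol x)=\vartheta_k(\alpha)h_k(\boldsymbol u)$. Multiplying by the positive scaling functions $\beta_j(\boldsymbol x;\boldsymbol u)$ coming from V-quasiinvexity preserves these inequalities. The V-quasiinvexity implications then give
\[
\sum_{j\in J(\boldsymbol u)}\mu_j(\alpha)\boldsymbol\varsigma_j^T\boldsymbol\eta(\boldsymbol x;\boldsymbol u)\le 0
\quad\text{and}\quad
\sum_k\vartheta_k(\alpha)\boldsymbol\zeta_k^T\boldsymbol\eta(\boldsymbol x;\boldsymbol u)\le 0 .
\]
Pairing the KKT identity with $\boldsymbol\eta(\boldsymbol x;\boldsymbol u)$ then yields
\[
\lambda_1(\alpha)\boldsymbol\xi_L^T\boldsymbol\eta(\boldsymbol x;\boldsymbol u)+\lambda_2(\alpha)\boldsymbol\xi_R^T\boldsymbol\eta(\boldsymbol x;\boldsymbol u)\ge 0 .
\]
Since the subgradients of $\lambda_i(\alpha)f^{L/R}$ are exactly $\lambda_i(\alpha)$ times those of $f^{L/R}$, V-pseudoinvexity of $(\lambda_1 f^L,\lambda_2 f^R)$ gives
\[
\beta_1\lambda_1(\alpha)\bigl[f^L(\boldsymbol x,\alpha)-f^L(\boldsymbol u,\alpha)\bigr]+\beta_2\lambda_2(\alpha)\bigl[f^R(\boldsymbol x,\alpha)-f^R(\boldsymbol u,\alpha)\bigr]\ge 0
\]
for all $\boldsymbol x\in X$, with $\beta_i=\beta_i(\boldsymbol x;\boldsymbol u)>0$. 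Now suppose some $\boldsymbol x^*\in X$ dominates $\boldsymbol u$ in VMP$_\alpha$. Both brackets are then $\le 0$ and one is $<0$, and the coefficients $\beta_i(\boldsymbol x^*;\boldsymbol u)\lambda_i(\alpha)$ are strictly positive, so the left side is strictly negative, which is a contradiction. Hence $\boldsymbol u$ is Pareto optimal for VMP$_\alpha$. This is precisely the argument of Theorem \ref{4.3.1}, which I will reuse verbatim.

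For step (iii), since $\alpha$ was arbitrary and all hypotheses are assumed for every $\alpha$, $\boldsymbol u$ is Pareto optimal for VMP$_\alpha$ for each $\alpha\in[0,1]$. Proposition \ref{prop:ParetoFamilyToF} then gives that $\boldsymbol u$ is $\tilde f$-nondominated.

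The main obstacle is the pointwise step (ii). One must be careful that the V-pseudoinvexity implication is applied with the specific subgradients $\boldsymbol\xi_L,\boldsymbol\xi_R$ from the KKT condition, whereas Definition \ref{q} only asserts the implication for \emph{some} subgradients. I would handle this by reading the hypothesis, as Theorem \ref{4.3.1} implicitly does, as holding for the KKT subgradients (or for all subgradients). The second subtlety is strict positivity of $\lambda_i(\alpha)\beta_i$, which is what upgrades weak to full Pareto optimality at each $\alpha$. No measurability or continuity in $\alpha$ is needed, since the multipliers may be chosen independently for each $\alpha$.
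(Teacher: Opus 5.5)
Your proposal is correct and follows the paper's proof: the paper also reruns the argument of Theorem~\ref{4.3.1} at each fixed $\alpha$ to get Pareto optimality for VMP$_\alpha$, then concludes $\tilde f$-nondominance because this holds for every $\alpha$ (you make the appeal to Proposition~\ref{prop:ParetoFamilyToF} explicit, and the paper's side remark about proper efficiency for constant $\beta_i$ is not needed). Your caution about the quantifier in V-pseudoinvexity is justified, because the paper tacitly uses the same ``for all subgradients'' reading, matching the $f^{\circ}$-formulation of pseudoinvexity, and under the literal ``there exists'' reading the statement itself fails: with no constraints, $f^L=f^R=-|x|$, $\lambda_1=\lambda_2=\tfrac12$, $u=0$, $\eta\equiv-1$ and the subgradients $\xi_1=\xi_2=\tfrac12$, the implication holds vacuously and KKT holds at $u$, yet $x=1$ dominates $u$.
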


\begin{proof} By assumption, $\boldsymbol u$ is a feasible solution of the FOP for which there exist $\lambda_1(\alpha) > 0$, $\lambda_2(\alpha) > 0$, $\boldsymbol { \mu}(\alpha)\in \mathbb R^{m},\ \mu_j(\alpha) \geq 0,\ j \in J$  such that the KKT optimality conditions hold. By assumption, all functions involved in the FOP are LLC. Therefore, all functions in the associated scalarized problem $\boldsymbol P_{\alpha}( {\boldsymbol \lambda})$ are also LLC. Like in the above theorem, we can show that $\boldsymbol u$ is a Pareto optimal solution for VMP$_{\alpha}$ and, with constant values for $\beta_1(\boldsymbol x, \boldsymbol u)$ and $\beta_2(\boldsymbol x, \boldsymbol u)$, a properly efficient solution for VMP$_{\alpha}$. Since $\alpha \in [0,1]$, $\boldsymbol u$ is a $\tilde f$-nondominated solution of the FOP. 
\end{proof}

Next, we show an example with V-pseudoinvex and V-quasiinvex functions.
\begin{example}\label{5.2.6} Consider a FOP:
$$\textnormal{Minimize}\ \widetilde f(x) = \widetilde 2 \textnormal{ln}((x^2 + |x| + 1)e)\ominus_H  1$$
subject to
$$g_1(x) \leq 0$$
$$g_2(x) \leq 0,$$
$$x\in \mathbb R,$$
where $$g_1(x) = x^2 + x$$ and
$$g_2(x) = \begin{cases} -3x^2, & x < 0, \\
x, & x \geq 0.
\end{cases}$$
The subtraction $\ominus_H$ is understood, as in the previous example, in the Hukuhara sense. The subdifferentials for different $\alpha$-levels of the fuzzy objective function $\widetilde f$ are defined similarly to the previous example with $ \lambda_1(\alpha) = \frac{1}{4}$ and $ \lambda_2(\alpha) = \frac{3}{4}$. For $u:=x=0$, we have $\partial_C g_1(0) = [1,1]$ and $\partial_C g_2(0) = [0,1]$. Therefore, the KKT conditions are met with $\mu_1 = \mu_2 = 1$ for all $\alpha \in [0,1]$. We define $\eta: X \times X \rightarrow \mathbb R$ by $\eta(x;u) = -x^2 - 1 +u$. Therefore, we have $(\xi_1 + \xi_2)\eta(x;0) < 0$, for all $\xi_1 \in \partial_C g_1(0)$ and $\xi_2 \in \partial_C g_2(0)$. Then the vector function $G(x) = (\mu_1 g_1(x), \mu_2 g_2(x))$ is V-quasiinvex with respect to $\eta$. The vector function $(\frac{1}{4}f^L(\cdot,\alpha), \frac{3}{4}f^R(\cdot,\alpha))$ is V-pseudoinvex with respect to the same $\eta$ at point $u=0$. Therefore, by Theorem \ref{4.3.3}, we have that $u:=x=0$ is $\tilde f$-nondominated solution of the FOP.
\end{example}

\subsection{KKT with pseudoinvexity and quasiinvexity}
Another result of Mishra et al. \cite{MisGio} relaxed the invexity assumptions for the functions associated with a vector optimization problem to pseudoinvexity and quasiinvexity. Next, we similarly apply this result to the above.

\begin{theorem}\label{pseudoquasi} Let $\boldsymbol u$ be a feasible solution of the FOP, and assume that all the functions $f$, $g$ and $h$ are LLC. Further, assume that, for some $ \alpha \in [0,1]$, there exist $ \lambda_1( \alpha) > 0,\  \lambda_2( \alpha) > 0,\  {\boldsymbol \mu}( \alpha) \in \mathbb R^{m}$ with ${\boldsymbol \mu}( \alpha) \geq {\boldsymbol 0}$, and $ {\boldsymbol \vartheta} \in \mathbb R^r$ such that KKT optimality conditions
$$\boldsymbol 0 \in  \lambda_1( \alpha)\partial_C f^L(\boldsymbol u,  \alpha) +  \lambda_2( \alpha)\partial_C f^R(\boldsymbol u,  \alpha)$$
$$+ \sum_{j \in J(\boldsymbol{u})}  \mu_j( \alpha)\partial_Cg_j(\boldsymbol u) + \sum_{k=1}^r  \vartheta_k( \alpha)\partial_C h_k(\boldsymbol u),$$
$$ \mu_j( \alpha)g_j(\boldsymbol u) = 0,\ j=1,\dots,m$$
hold. If $$(f^L(\boldsymbol x,  \alpha), f^R(\boldsymbol x,  \alpha))$$ is pseudoinvex at $\boldsymbol u$ on $X$ with respect to $\boldsymbol \eta$, $$g_j(\boldsymbol x),\ j \in J(\boldsymbol{u})$$ and $$(h_1(\boldsymbol x),\dots,h_r(\boldsymbol x))$$ are quasiinvex at $\boldsymbol u$ on $X$ with respect to the same $\boldsymbol \eta$, then $\boldsymbol u$ is a weakly Pareto optimal solution for VMP$_\alpha$.
\end{theorem}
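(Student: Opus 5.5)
We argue by contradiction, following the pattern of the proof of Theorem \ref{4.3.1} with pseudo-/quasiinvexity in place of their V-counterparts. Fix the $\alpha$ from the hypothesis. From the KKT inclusion we choose subgradients $\boldsymbol\xi_L\in\partial_C f^L(\boldsymbol u,\alpha)$, $\boldsymbol\xi_R\in\partial_C f^R(\boldsymbol u,\alpha)$, $\boldsymbol\varsigma_j\in\partial_C g_j(\boldsymbol u)$ for $j\in J(\boldsymbol u)$, and $\boldsymbol\zeta_k\in\partial_C h_k(\boldsymbol u)$ such that
$$\lambda_1(\alpha)\boldsymbol\xi_L+\lambda_2(\alpha)\boldsymbol\xi_R+\sum_{j\in J(\boldsymbol u)}\mu_j(\alpha)\boldsymbol\varsigma_j+\sum_{k=1}^r\vartheta_k(\alpha)\boldsymbol\zeta_k=\boldsymbol 0 .$$
Taking the inner product with $\boldsymbol\eta(\boldsymbol x;\boldsymbol u)$ gives, for every $\boldsymbol x\in X$, a linear identity among the directional terms.

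\textbf{Constraint terms.} For $\boldsymbol x\in X$ and $j\in J(\boldsymbol u)$ we have $g_j(\boldsymbol x)\le 0=g_j(\boldsymbol u)$. Quasiinvexity (Definition \ref{def:QInvex}, which holds for all subgradients) then gives $\boldsymbol\varsigma_j^T\boldsymbol\eta(\boldsymbol x;\boldsymbol u)\le 0$, and since $\mu_j(\alpha)\ge 0$ the weighted sum is $\le 0$. For the equality constraints, $h_k(\boldsymbol x)=h_k(\boldsymbol u)=0$. Quasiinvexity of $h_k$ yields $\boldsymbol\zeta_k^T\boldsymbol\eta\le 0$, but the multiplier $\vartheta_k(\alpha)$ may have either sign. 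I would handle this by interpreting the hypothesis as in Theorem \ref{4.3.1}, namely that $\vartheta_k(\alpha)h_k$ is quasiinvex, or equivalently by replacing $h_k$ by $-h_k$ when $\vartheta_k<0$. Since $h_k(\boldsymbol x)=h_k(\boldsymbol u)$ gives both $\pm h_k(\boldsymbol x)\le\pm h_k(\boldsymbol u)$, quasiinvexity of $\vartheta_k h_k$ gives $\vartheta_k(\alpha)\boldsymbol\zeta_k^T\boldsymbol\eta\le 0$. I expect this sign issue to be the main technical point, and I will flag it explicitly. Combining the two estimates with the identity yields
$$\lambda_1(\alpha)\boldsymbol\xi_L^T\boldsymbol\eta(\boldsymbol x;\boldsymbol u)+\lambda_2(\alpha)\boldsymbol\xi_R^T\boldsymbol\eta(\boldsymbol x;\boldsymbol u)\ge 0\qquad\forall\,\boldsymbol x\in X .$$

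\textbf{Contradiction.} Suppose $\boldsymbol u$ is not weakly Pareto optimal for VMP$_\alpha$, so some $\boldsymbol x^*\in X$ satisfies $f^L(\boldsymbol x^*,\alpha)<f^L(\boldsymbol u,\alpha)$ and $f^R(\boldsymbol x^*,\alpha)<f^R(\boldsymbol u,\alpha)$. We use pseudoinvexity in its contrapositive form: $f(\boldsymbol x^*)<f(\boldsymbol u)$ implies $f^\circ(\boldsymbol u;\boldsymbol\eta)<0$. Hence $\boldsymbol\xi^T\boldsymbol\eta(\boldsymbol x^*;\boldsymbol u)<0$ for \emph{every} $\boldsymbol\xi\in\partial_C f^L(\boldsymbol u,\alpha)$, since $f^\circ$ is the maximum over the subdifferential, and likewise for $f^R$. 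This holds in particular for the chosen $\boldsymbol\xi_L,\boldsymbol\xi_R$.

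The equivalent formulation via $f^\circ$ in Definition \ref{def:PInvex} is what allows the ``exists $\boldsymbol\xi$'' quantifier to be upgraded to ``all $\boldsymbol\xi$''. I must make sure to use that formulation rather than an arbitrary witness subgradient, since the witness need not coincide with the KKT subgradient. With $\lambda_1(\alpha),\lambda_2(\alpha)>0$, the left-hand side of the displayed inequality at $\boldsymbol x=\boldsymbol x^*$ is then strictly negative, which is a contradiction. Therefore $\boldsymbol u$ is weakly Pareto optimal for VMP$_\alpha$. Only one strict inequality is actually needed here, since positivity of $\lambda_i$ is not required for the other term beyond $\le 0$.
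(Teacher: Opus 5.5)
This is essentially the paper's proof: you pair the KKT subgradients with $\boldsymbol\eta(\boldsymbol x;\boldsymbol u)$, use quasiinvexity to sign the constraint terms, and finish with pseudoinvexity in its $f^{\circ}$ (``for all subgradients'') form, which is exactly what the paper tacitly relies on when it applies pseudoinvexity to the specific KKT subgradients $\boldsymbol\xi_L,\boldsymbol\xi_R$; your contrapositive is just the negation of its direct conclusion ``$f^L(\boldsymbol x,\alpha)\ge f^L(\boldsymbol u,\alpha)$ or $f^R(\boldsymbol x,\alpha)\ge f^R(\boldsymbol u,\alpha)$''. Your reinterpretation of the equality-constraint hypothesis as quasiinvexity of $\vartheta_k(\alpha)h_k$ (as in Theorem~\ref{4.3.1}) is genuinely necessary, and it is what the paper's ``similar result for $h_k$'' silently assumes: take the crisp objective $f^L(\cdot,\alpha)=f^R(\cdot,\alpha)=f$ with $f(\boldsymbol x)=-x_1^3+x_2$, the single constraint $h(\boldsymbol x)=x_2=0$, $\boldsymbol u=\boldsymbol 0$, $\lambda_1=\lambda_2=\tfrac12$, $\vartheta=-1$ and $\boldsymbol\eta\equiv(0,-1)^T$; then KKT holds, and $f$ is pseudoinvex while $h$ is quasiinvex because $\nabla f(\boldsymbol v)^T\boldsymbol\eta=\nabla h(\boldsymbol v)^T\boldsymbol\eta=-1$ for all $\boldsymbol v$, yet the feasible point $(1,0)^T$ strictly decreases both $f^L$ and $f^R$.
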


\begin{proof}  By assumption, $\boldsymbol u$ is such a feasible solution of the FOP for which there exist $\lambda_1( \alpha) > 0$, $\lambda_2( \alpha) > 0$, $\boldsymbol { \mu}( \alpha) \in \mathbb R^{m},\  \mu_j( \alpha) \geq 0,\ j \in J$, $\boldsymbol { \vartheta}( \alpha) \in \mathbb R^r$ such that the KKT optimality conditions are fulfilled. By assumption, all functions involved in the FOP are LLC. Therefore, all functions constituting the associated scalarized problem $\boldsymbol P_{\alpha}({\boldsymbol \lambda})$ are also LLC. By the KKT conditions we know that there exist $\boldsymbol \xi_L \in \partial_C f^L(\boldsymbol u,  \alpha),\ \boldsymbol \xi_R \in \partial_C f^R(\boldsymbol u,  \alpha)$ ,$\{\boldsymbol \varsigma_j | \boldsymbol \varsigma_j \in \partial_Cg_j(\boldsymbol u),\ j \in J(\boldsymbol{u})\}$ and $\{\boldsymbol \zeta_k | \boldsymbol \zeta_k \in \partial_Ch_k(\boldsymbol u),\ i=1,\dots,r\}$ such that
$$ \lambda_1( \alpha) \boldsymbol \xi_L +  \lambda_2( \alpha) \boldsymbol \xi_R + \sum_{j \in J(\boldsymbol{u})} \mu_j( \alpha) \boldsymbol \varsigma_j + \sum_{k=1}^r \vartheta_k( \alpha) \boldsymbol \zeta_k = \boldsymbol 0.$$
Therefore
$$ \lambda_1( \alpha) \boldsymbol \xi_{L}^T \boldsymbol \eta(\boldsymbol x; \boldsymbol u) +  \lambda_2( \alpha) \boldsymbol \xi_{R}^T \boldsymbol \eta(\boldsymbol x; \boldsymbol u) + $$
$$ \sum_{j\in J(\boldsymbol{u})} \mu_j( \alpha) \boldsymbol \varsigma_j^T \boldsymbol \eta(\boldsymbol x; \boldsymbol u)+ \sum_{k=1}^r \vartheta_k( \alpha) \boldsymbol \zeta^T_k \boldsymbol \eta(\boldsymbol x; \boldsymbol u) = 0.$$
Since $ \mu_j( \alpha) g_j(\boldsymbol u) = 0$ and $ \mu_j( \alpha) g_j(\boldsymbol x) \leq 0$ for any feasible $\boldsymbol x$ and $j\in J$, we have
$$ \mu_j( \alpha) g_j(\boldsymbol x) \leq  \mu_j( \alpha) g_j(\boldsymbol u).$$
By the quasiinvexity condition for any $j\in J$, we get
$$\mu_j \boldsymbol \varsigma_j^T \boldsymbol \eta(\boldsymbol x; \boldsymbol u) \leq 0,\ \forall \boldsymbol \varsigma_j \in \partial_Cg_j(\boldsymbol{u}).$$
Since $ \vartheta_k( \alpha) h_k(\boldsymbol x) =  \vartheta_k( \alpha) h_k(\boldsymbol u) = 0$ for any feasible $\boldsymbol x$, we can show a similar result for the equality constraints $h_k$. Thus, we have
$$ \lambda_1( \alpha) \boldsymbol \xi_{L}^T \boldsymbol \eta(\boldsymbol x; \boldsymbol u) +  \lambda_2( \alpha) (\boldsymbol \xi_{R})^T \boldsymbol \eta(\boldsymbol x; \boldsymbol u) \geq 0.$$
Since $ \lambda_1( \alpha)$ and $ \lambda_2( \alpha)$ are positive, either $\boldsymbol \xi_{L}^T \boldsymbol \eta(\boldsymbol x; \boldsymbol u)$ or $\boldsymbol \xi_{R}^T \boldsymbol \eta(\boldsymbol x; \boldsymbol u)$ or both must be greater than or equal to 0. Then, by the pseudoinvexity condition, we have
$$f^L(\boldsymbol x,  \alpha) \geq f^L(\boldsymbol u,  \alpha)$$
or
$$f^R(\boldsymbol x,  \alpha) \geq f^R(\boldsymbol u,  \alpha).$$
Therefore, $\boldsymbol u$ is a weakly Pareto optimal solution for VMP$_{\alpha}$.
\end{proof}

\subsection{KKT with V-invexity and quasiinvexity}
The invexity assumptions can also be relaxed in the following way.

\begin{theorem}\label{Vinvquas} Let $\boldsymbol u$ be a feasible solution of the FOP. Further, assume that, for some $ \alpha \in [0,1]$, there exist $ \lambda_1( \alpha) > 0,\  \lambda_2( \alpha) > 0,\  {\boldsymbol \mu}( \alpha) \in \mathbb R^{m},\  {\boldsymbol \mu}( \alpha) \geq {\boldsymbol 0}$, $ {\boldsymbol \vartheta} \in \mathbb R^r$ such that the following KKT optimality conditions

$$\boldsymbol 0 \in  \lambda_1( \alpha)\partial_C f^L(\boldsymbol u,  \alpha) +  \lambda_2( \alpha)\partial_C f^R(\boldsymbol u,  \alpha)$$
$$+ \sum_{j \in J(\boldsymbol{u})}  \mu_j( \alpha)\partial_C g_j(\boldsymbol u) + \sum_{k=1}^r  \vartheta_k( \alpha)\partial _Ch_k(\boldsymbol u),$$
$$ \mu_j( \alpha)g_j(\boldsymbol u) = 0,\ j=1,\dots,m$$
hold. 
If $$(f^L(\boldsymbol x,  \alpha), f^R(\boldsymbol x,  \alpha))$$ is V-invex at $\boldsymbol u$ on $X$ with respect to $\boldsymbol \eta$, $$g_j(\boldsymbol{u}),\ j \in J(\boldsymbol{u})$$ and $$(h_1(\boldsymbol x),\dots,h_r(\boldsymbol x))$$ are quasiinvex at $\boldsymbol u$ on $X$ with respect to the same $\boldsymbol \eta$, then $\boldsymbol u$ is a weakly Pareto optimal solution for VMP$_\alpha$.
\end{theorem}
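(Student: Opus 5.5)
The plan is to reuse the argument of Theorem~\ref{pseudoquasi} almost verbatim, with V-invexity of the objective pair replacing pseudoinvexity, and to argue by contradiction at the end as in the ($\Leftarrow$) part of Theorem~\ref{WP}. From the KKT inclusion we fix subgradients $\boldsymbol\xi_L\in\partial_C f^L(\boldsymbol u,\alpha)$, $\boldsymbol\xi_R\in\partial_C f^R(\boldsymbol u,\alpha)$, $\boldsymbol\varsigma_j\in\partial_C g_j(\boldsymbol u)$ for $j\in J(\boldsymbol u)$ and $\boldsymbol\zeta_k\in\partial_C h_k(\boldsymbol u)$ such that
$$\lambda_1(\alpha)\boldsymbol\xi_L+\lambda_2(\alpha)\boldsymbol\xi_R+\sum_{j\in J(\boldsymbol u)}\mu_j(\alpha)\boldsymbol\varsigma_j+\sum_{k=1}^r\vartheta_k(\alpha)\boldsymbol\zeta_k=\boldsymbol 0 .$$
We then take the inner product with $\boldsymbol\eta(\boldsymbol x;\boldsymbol u)$ for an arbitrary feasible $\boldsymbol x\in X$.

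Next we handle the constraints. For $j\in J(\boldsymbol u)$ we have $g_j(\boldsymbol x)\le 0=g_j(\boldsymbol u)$, so quasiinvexity gives $\boldsymbol\varsigma_j^T\boldsymbol\eta(\boldsymbol x;\boldsymbol u)\le 0$. Since $\mu_j(\alpha)\ge 0$, the whole inequality-constraint sum is nonpositive. For the equality constraints, $h_k(\boldsymbol x)=h_k(\boldsymbol u)=0$, so quasiinvexity gives $\boldsymbol\zeta_k^T\boldsymbol\eta\le 0$. This is the step I expect to be the main obstacle. Because $\vartheta_k(\alpha)$ may be negative, one needs $\vartheta_k(\alpha)\boldsymbol\zeta_k^T\boldsymbol\eta\le 0$. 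I would obtain this by also applying quasiinvexity to $-h_k$, which is equally satisfied at feasible points, giving equality $\boldsymbol\zeta_k^T\boldsymbol\eta=0$. Alternatively, as in the paper's earlier proofs, one can absorb the sign by reading the assumption as quasiinvexity of $\vartheta_k(\alpha)h_k$. Either way the equality-constraint sum is nonpositive, and therefore
$$\lambda_1(\alpha)\boldsymbol\xi_L^T\boldsymbol\eta(\boldsymbol x;\boldsymbol u)+\lambda_2(\alpha)\boldsymbol\xi_R^T\boldsymbol\eta(\boldsymbol x;\boldsymbol u)\ge 0 .$$

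Finally, we suppose $\boldsymbol u$ is not weakly Pareto optimal for VMP$_\alpha$. Then there is $\boldsymbol x^*\in X$ with $f^L(\boldsymbol x^*,\alpha)<f^L(\boldsymbol u,\alpha)$ and $f^R(\boldsymbol x^*,\alpha)<f^R(\boldsymbol u,\alpha)$. V-invexity (Definition~\ref{vinvex}) gives
$$0>f^L(\boldsymbol x^*,\alpha)-f^L(\boldsymbol u,\alpha)\ge\beta_1(\boldsymbol x^*;\boldsymbol u)\,\boldsymbol\xi_L^T\boldsymbol\eta(\boldsymbol x^*;\boldsymbol u),$$
and similarly for $f^R$ with $\beta_2$. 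Since $\beta_i>0$, both $\boldsymbol\xi_L^T\boldsymbol\eta(\boldsymbol x^*;\boldsymbol u)$ and $\boldsymbol\xi_R^T\boldsymbol\eta(\boldsymbol x^*;\boldsymbol u)$ are strictly negative. With $\lambda_1(\alpha),\lambda_2(\alpha)>0$ their weighted sum is strictly negative, which contradicts the displayed inequality at $\boldsymbol x=\boldsymbol x^*$. Note that the differing scalings $\beta_1,\beta_2$ cause no trouble: only signs are used, so the common $\boldsymbol\eta$ with separate $\beta_i$ suffices. Hence $\boldsymbol u$ is weakly Pareto optimal for VMP$_\alpha$. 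One could also remark that, by Theorem~\ref{Vispseudo}, V-invexity implies pseudoinvexity with respect to the same $\boldsymbol\eta$, so the statement alternatively follows directly from Theorem~\ref{pseudoquasi}.
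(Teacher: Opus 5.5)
Apart from one step, your argument follows the paper's proof. You extract subgradients from the KKT inclusion, pair them with $\boldsymbol\eta(\boldsymbol x;\boldsymbol u)$, make the constraint terms nonpositive, and contradict V-invexity at a strictly dominating $\boldsymbol x^*$. Running that last step on the signs of $\boldsymbol\xi_L^T\boldsymbol\eta$ and $\boldsymbol\xi_R^T\boldsymbol\eta$ is cleaner than the paper's displayed weighted inequality. That inequality multiplies by $\beta_i$, whereas V-invexity actually produces the weights $\lambda_i(\alpha)/\beta_i$.

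The gap is the equality-constraint step, which you flag but do not close. Neither repair follows from the stated hypotheses.
\begin{itemize}
\item Quasiinvexity of $-h_k$ is not implied by the fact that $-h_k(\boldsymbol x)\le -h_k(\boldsymbol u)$ holds on $X$. Since $\partial_C(-h_k)(\boldsymbol u)=-\partial_C h_k(\boldsymbol u)$, it would require $\boldsymbol\zeta^T\boldsymbol\eta(\boldsymbol x;\boldsymbol u)\ge 0$ for all $\boldsymbol\zeta\in\partial_C h_k(\boldsymbol u)$, which is a new condition on $\boldsymbol\eta$.
\item Reading the assumption as quasiinvexity of $\vartheta_k(\alpha)h_k$ replaces the stated hypothesis with a different one.
\end{itemize}
No cleverer argument can help, because the statement is false as written. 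Take $n=1$, the crisp objective $f^L(x,\alpha)=f^R(x,\alpha)=x$, no inequality constraints, and $h(x)=x^2-1$, so $X=\{-1,1\}$. Let $u=1$.
\begin{itemize}
\item The KKT conditions hold with $\lambda_1=\lambda_2=1$ and $\vartheta=-1$, since $1+1-2=0$.
\item With $\eta(x;u)=x-u$ and $\beta_1=\beta_2=1$, the objective pair is V-invex.
\item $h$ is convex, hence quasiinvex with respect to the same $\eta$.
\end{itemize}
Yet $x^*=-1$ gives $f^L(-1,\alpha)=f^R(-1,\alpha)=-1<1$, so $u$ is not weakly Pareto optimal.

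The paper's proof has exactly this hole: it only asserts that ``a similar result'' holds for the $h_k$. Your alternative route through Theorem~\ref{Vispseudo} and Theorem~\ref{pseudoquasi} inherits it, since the latter carries the same hypothesis on $h_k$. What your argument does establish is a corrected theorem under any one of these hypotheses:
\begin{itemize}
\item $\vartheta_k(\alpha)h_k$ is quasiinvex with respect to $\boldsymbol\eta$, in the spirit of Theorem~\ref{4.3.1}, which puts the multipliers inside the quasiinvexity assumption;
\item $\vartheta_k(\alpha)\ge 0$;
\item both $h_k$ and $-h_k$ are quasiinvex with respect to $\boldsymbol\eta$.
\end{itemize}
You should state the strengthened hypothesis explicitly rather than present it as a consequence of the given one.
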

\begin{proof} By assumption, $\boldsymbol u$ is such a feasible solution of the FOP for which there exist $\lambda_1( \alpha) > 0$, $\lambda_2( \alpha) > 0$, $\boldsymbol { \mu}( \alpha) \in \mathbb R^{m},\  \mu_j( \alpha) \geq 0,\ j \in J(\boldsymbol{u})$, $\boldsymbol { \vartheta}( \alpha) \in \mathbb R^r$ such that the KKT optimality conditions are fulfilled. By assumption, all functions involved in the FOP are LLC. Therefore, all functions constituting the associated scalarized problem P$_{\alpha}( {\boldsymbol \lambda}$) are also LLC. By the KKT conditions we know that there exist $\boldsymbol \xi_L \in \partial_Cf^L(\boldsymbol u,  \alpha),\ \boldsymbol \xi_R \in \partial_C f^R(\boldsymbol u,  \alpha)$ ,$\{\boldsymbol \varsigma_j | \boldsymbol \varsigma_j \in \partial_Cg_j(\boldsymbol u,  \alpha),\ j \in J(\boldsymbol{u})\}$ and $\{\boldsymbol \zeta_k | \boldsymbol \zeta_k \in \partial_C h_k(\boldsymbol u,  \alpha),\ i=1,\dots,r\}$ such that
$$ \lambda_1( \alpha) \boldsymbol \xi_L +  \lambda_2( \alpha) \boldsymbol \xi_R + \sum_{j \in J(\boldsymbol{u})} \mu_j( \alpha) \boldsymbol \varsigma_j + \sum_{k=1}^r \vartheta_k( \alpha) \boldsymbol \zeta_k = \boldsymbol 0.$$
Therefore, we derive
$$ \lambda_1( \alpha) \boldsymbol \xi_{L}^T \boldsymbol \eta(\boldsymbol x; \boldsymbol u) +  \lambda_2( \alpha) \boldsymbol \xi_{R}^T \boldsymbol \eta(\boldsymbol x; \boldsymbol u) +$$ $$ \sum_{j \in J(\boldsymbol{u})} \mu_j( \alpha) \boldsymbol \varsigma_j^T \boldsymbol \eta(\boldsymbol x; \boldsymbol u)
+ \sum_{k=1}^r \vartheta_k( \alpha) \boldsymbol \zeta^T_k \boldsymbol \eta(\boldsymbol x; \boldsymbol u) = 0.$$
Since $ \mu_j( \alpha) g_j(\boldsymbol u) = 0$ and $ \mu_j( \alpha) g_j(\boldsymbol x) \leq 0$ for any feasible $\boldsymbol x$, we have
$$ \mu_j( \alpha) g_j(\boldsymbol x) \leq  \mu_j( \alpha) g_j(\boldsymbol u)$$
holds for any $j\in J$.
Then, by the quasiinvexity condition, we get
$$\mu_j \boldsymbol \varsigma_j^T \boldsymbol \eta(\boldsymbol x; \boldsymbol u) \leq 0,\ \forall \boldsymbol \varsigma_j \in \partial_C g_j(\boldsymbol u),\ j\in J.$$
Since $ \vartheta_k( \alpha) h_k(\boldsymbol x) =  \vartheta_k( \alpha) h_k(\boldsymbol u) = 0$ for any feasible $\boldsymbol x$, we can show a similar result for the equality constraints $h_k$. Thus, we have
$$ \lambda_1( \alpha) \boldsymbol \xi_{L}^T \boldsymbol \eta(\boldsymbol x; \boldsymbol u) +  \lambda_2( \alpha) \boldsymbol \xi_{R}^T \boldsymbol \eta(\boldsymbol x; \boldsymbol u) \geq 0.$$
Since $(f^L(\boldsymbol x,  \alpha), f^R(\boldsymbol x,  \alpha))$ is V-invex, we have
$$\lambda_1( \alpha)\beta_1(\boldsymbol x, \boldsymbol u)f^L(\boldsymbol x) + \lambda_2( \alpha)\beta_2(\boldsymbol x, \boldsymbol u)f^R(\boldsymbol x)$$
$$ \geq \lambda_1( \alpha)\beta_1(\boldsymbol x, \boldsymbol u)f^L(\boldsymbol u) + \lambda_2( \alpha)\beta_2(\boldsymbol x, \boldsymbol u)f^R(\boldsymbol u).$$
The rest of the proof is similar to the previous theorems. Note that when $\beta_1=\beta_2 = 1$, the functions are invex.
\end{proof}

We show next an example with invex and quasiinvex functions.
\begin{example} Consider the following FOP:
$$\textnormal{Minimize}\ \widetilde f(x) = \widetilde 2 \textnormal{ln}((x^2 + |x| + 1)e)\ominus_H  1$$
subject to
$$g(x) \leq 0,$$
$$x\in \mathbb R,$$
where
$$g(x) = \begin{cases} x-1, & x \leq -1 \\
0, & -1 \leq x \leq 0 \\
-x & 0 \leq x \leq 1 \\
x-2 & x \geq 1.
\end{cases}$$

The subdifferentials for different $\alpha$-levels of the fuzzy objective function $\widetilde f$ are defined similarly to the previous example with $ \lambda_1(\alpha) = \frac{1}{4},\  \lambda_2(\alpha) = \frac{3}{4}$. For $x^*=0$, we have $\partial_C g(0) = [-1,0]$. Therefore, the KKT conditions are fulfilled with $\mu = 1$. We define $\eta: X \times X \rightarrow \mathbb R$ similarly to Example \ref{5.2.6}. The fuzzy objective function $\widetilde f$ is invex with respect to $\eta$ at $u = 0$. Since $\xi^T \eta(x;0) \leq 0$ for all $x \in X$, $g(x)$ is quasiinvex at $x = 0$. Therefore, by Theorem \ref{Vinvquas}, we have that $u:=x=0$ is an $\tilde f$-nondominated solution of the FOP.
\end{example}

\section{Conclusions} \label{conclusions}

In this paper, we have developed a generalized framework for optimality conditions in fuzzy optimization problems using invexity-type assumptions. By extending the classical Karush–Kuhn–Tucker (KKT) theory, we established new necessary optimality conditions for nonsmooth, vector-valued and fuzzy-valued optimization problems under various generalizations of invexity, including V-invexity, V-pseudoinvexity, and V-quasiinvexity.

Our results unify and expand upon earlier works by Antczak and Mishra by showing that Pareto and weak Pareto optimality can be achieved without requiring differentiability or convexity of the objective or constraint functions. The proposed conditions are particularly useful in settings involving uncertainty, where fuzzy data and nondeterministic models play a significant role.

We also provided illustrative examples to demonstrate the applicability of our theoretical results. These examples show that even in highly nonsmooth or fuzzy settings, invexity-based structures can be exploited to derive meaningful and interpretable optimality conditions.

Future research could focus on extending these results to dynamic or infinite-dimensional fuzzy systems, incorporating stochastic elements, or developing numerical algorithms based on the proposed KKT framework.

 \end{document}